

\documentclass[draft,12pt,onecolumn]{IEEEtran}

\usepackage{amsmath}
\usepackage{amsthm}
\usepackage{amsfonts}
\usepackage{amssymb}
\usepackage{mathrsfs}
\usepackage{soul}
\usepackage[colorlinks = true, allcolors=black, urlcolor=darkgray]{hyperref}
\usepackage{url}
\usepackage[usenames,dvipsnames]{xcolor}
\usepackage{enumerate}
\usepackage{tikz}
\usetikzlibrary{matrix}

\usepackage{algorithm}
\usepackage{algorithmic}

  \newcommand\phantomarrow[2]{%
  \setbox0=\hbox{$\displaystyle #1\to$}%
  \hbox to \wd0{%
    $#2\mapstochar
     \cleaders\hbox{$\mkern-1mu\relbar\mkern-3mu$}\hfill
     \mkern-7mu\rightarrow$}%
  \,}

\usepackage{datetime}

\pdfminorversion=4

\newcommand{\Z}{\mathbb{Z}}
\newcommand{\R}{\mathbb{R}}
\newcommand{\C}{\mathbb{C}}

\newcommand{\T} {\mathbb{T}}

\newcommand{\M} {\mathcal{M}}
\newcommand{\N} {\mathcal{N}}
\newcommand{\KL} {\mathbb{S}}
\newcommand{\J} {\mathbb{J}}
\newcommand{\Lp} {\mathcal{L}}
\newcommand{\de} {\mathrm{d}}
\newcommand{\tr} {\mathrm{tr}}

\renewcommand{\check}{\color{black}}

\theoremstyle{definition}
\newtheorem{Lemma}{Lemma}

\newtheorem{Theorem}{Theorem}
\newtheorem{Remark}{Remark}

\newtheorem{Proposition}{Proposition}
\newtheorem{Problem}{Problem}

\definecolor{Royalblue}{cmyk}{1,0.30,0.2,0.2}

\newcommand{\beq}{\begin{equation}}
\newcommand{\eeq}{\end{equation}}
\newcommand{\bea}{\begin{eqnarray}}
\newcommand{\eea}{\end{eqnarray}}

\def\bmat{\left[ \begin{array}}
\def\emat{\end{array} \right]}

\author{Giacomo Baggio
\thanks{G.~Baggio is with the Dipartimento di Ingegneria dell'€™Informazione, Universit\`a di Padova, via Gradenigo, 6/B€" I-35131 Padova, Italy. E-mail: \texttt{giacomo.baggio@studenti.unipd.it}.}}

\begin{document}

\title{Further Results on the Convergence \\ of  the Pavon--Ferrante Algorithm \\ for Spectral Estimation}

\maketitle

\begin{abstract} 
In this paper, we provide a detailed analysis of the global convergence properties of an extensively studied and extremely effective fixed-point algorithm for the Kullback--Leibler approximation of spectral densities, proposed by Pavon and Ferrante in \cite{PF06}. Our main result states that the algorithm globally converges to one of its fixed points.
\end{abstract}

\begin{IEEEkeywords}
Approximation of spectral densities, spectral estimation, generalized moment problems, Kullback--Leibler divergence, fixed-point iteration, convergence analysis.
\end{IEEEkeywords}

\section{Introduction}

In recent years, the problem of approximating---in an optimal sense---a spectral density with another one satisfying some given constraints has received considerable attention in the control and signal processing community. In \cite{GL03}, Georgiou and Lindquist considered the following formulation of the above-mentioned approximation problem: Find the optimal Kullback--Leibler approximation of a spectral density given
\begin{enumerate}[(i)]
\item  an a-priori estimate of the spectrum describing a zero-mean second-order stationary process, and
\item asymptotic state-covariance data that are typically inconsistent with the a-priori estimate. These data are obtained by feeding the above process to a measurement device consisting of a bank of rational filters. 
\end{enumerate}
According to this formulation, the approximation problem turns into a (convex) optimization problem with integral constraints which falls into the celebrated category of \emph{(generalized) moment problems}. During the past century, the latter class of problems has played a crucial role in many ares of the engineering and mathematical sciences, see e.g. \cite{A65moments,BL08}
and references therein. In particular, some noteworthy (generalized) moment problems, which are close relatives of the Georgiou--Lindquist approximation problem, are the \emph{covariance extension problem} \cite[Ch. 12.5]{LP15}, \cite{G87,BLGM95,BGL98},  
\emph{THREE-like spectral estimation} \cite{BGL00,G01,RFP09,RFP10}, 
the classical \emph{Nevanlinna--Pick interpolation problem} \cite{BTL01,BLN03,BTLM06}, and its generalization, the \emph{augmented Basic Interpolation Problem} (aBIP) \cite{dym1998basic,bolotnikov2006boundary}. 
Moreover, variations of the latter classes of problems have generated a huge stream of literature in recent years, see for instance \cite{FPR08,GL08,CFPP11,FMP12,ZF12,FPZ12,Z14,Z15,GL16}. Among the large number of applications emerging from the class of (generalized) moment problems it is worth mentioning, besides spectral estimation, those related to system modeling/identification and optimal $\mathcal{H}_{\infty}$~control  \cite{stefanovski2015,stefanovski2015reformulation,stefanovski2016new}.

In \cite{GL03}, the optimization problem was approached by resorting to the dual problem which is finite-dimensional but, in general, it does not admit a closed-form solution. Also, standard gradient-based minimization techniques for the numerical solution of the dual problem have proved to be computationally demanding and severely ill-conditioned. In order to tackle these issues, in \cite{PF06}, Pavon and Ferrante proposed an alternative iterative method for the solution of the Georgiou--Lindquist approximation problem. The Pavon--Ferrante algorithm is a surprisingly simple and efficient nonlinear fixed-point iteration in the set of positive semi-definite unit trace matrices. Furthermore, the algorithm exhibits very attractive and robust properties from a numerical viewpoint, since it can be implemented via the solution of an algebraic Riccati equation and a Lyapunov equation \cite{FRT11}. On the other hand, despite the huge amount of numerical evidences, proving the convergence of the latter algorithm to a prescribed set of fixed points---which provide the solution of the approximation problem---has revealed to be an highly non-trivial challenge \cite{FPR07,FRT11}. In particular, in \cite{FRT11} it has been shown that the Pavon--Ferrante algorithm is locally convergent to the aforesaid set of fixed points, through a rather tortuous yet enlightening proof. Nonetheless, a proof of \emph{global} convergence of the algorithm, though conjectured and supported by a large number of numerical simulations, has so far been elusive.

The present paper addresses this problem. Specifically, we consider a cost functional arising from the formulation of the dual problem and we show that the latter is decreasing along the trajectories generated by the Pavon--Ferrante algorithm. This provides an answer to a conjecture raised in \cite[Sec.~V]{FRT11} and leads to the main contribution of the paper, namely, a proof of {\em global} convergence of the Pavon--Ferrante algorithm towards its set of fixed points.


{\em Paper structure.} The paper is organized as follows. In Section \ref{sec:prob}, we introduce the Georgiou--Lindquist spectral approximation problem and its solution via the Pavon--Ferrante algorithm. In Section \ref{sec:prelim}, we estabilish some auxiliary results. Section \ref{sec:global} contains the proof of global convergence to the set of fixed points of the Pavon--Ferrante algorithm. 
Finally, Section \ref{sec:conclusion} collects some concluding remarks.

{\em Notation.} We let $\Z$, $\C$, $\R$, $\R_{>0}$, $\R_{\geq 0}$, and $\T$ denote the set of integer, complex, real, positive real, non-negative real numbers, and the unit circle in the complex plane, respectively. We denote by $\C^{n\times m}$ the set of $n\times m$ matrices with complex entries. Given $A\in\C^{n\times m}$, $A^{*}$ will denote the Hermitian transpose of $A$.  We write $A\geq 0$ ($A>0$) to mean that $A=A^{*}\in\C^{n\times n}$ is positive semi-definite (positive definite, respectively). For $A\geq 0$, $A^{1/2}$ will denote the principal matrix square root of $A$, i.e., the unique positive semi-definite Hermitian matrix whose square is $A$. We endow the space $\C^{n}$ with the standard inner product $\langle x,y\rangle=x^{*}y$ and norm $\|x\|^{2}:=\langle x,x\rangle$, for $x,y\in\C^{n}$, and the space of Hermitian matrices of dimension $n\times n$, denoted by $\mathbb{H}_{n}$, with the trace inner product $\langle X,Y\rangle:=\mathrm{tr}(XY^{*})$ and Frobenius norm $\|X\|_{\mathrm{F}}^{2}:=\mathrm{tr}(XX^{*})$, for $X,Y\in \mathbb{H}_{n}$, where $\tr(\cdot)$ denotes the trace operator. Moreover, we denote by $\mathfrak{S}_{n}$ the convex set of $n\times n$ positive semi-definite unit trace matrices. For a matrix-valued function $G(z)$ in the complex variable $z$, $G^{*}(z)$ will denote the analytic continuation of the function that for $z\in\T$ equals the Hermitian transpose of $G(z)$.
Finally, $\mathcal{C}(\T)$ will denote the set of continuous functions on $\T$, and $\mathcal{C}_{+}(\T)$ the set of continuous functions on $\T$ which take (strictly) positive values on the same region, i.e., the space of continuous coercive discrete-time spectral density functions. 

\section{Problem statement}\label{sec:prob}

In this section, we first review the spectral density approximation problem treated in \cite{GL03}. Then, we discuss its solution via the algorithm proposed in \cite{PF06}. 

\subsection{The Georgiou--Lindquist approximation problem}

Let $\{\,y(t),\, t\in\Z\,\}$ be a zero-mean purely nondeterministic second-order stationary process and assume that an a-priori estimate  $\Psi(e^{j\theta})\in \mathcal{C}_{+}(\T)$ of the spectral density of $y(t)$ is given.
Consider the rational matrix transfer function
\[
	G(z) = (zI-A)^{-1}B,\quad A\in\C^{n\times n},\ \ B\in\C^{n\times 1},
\]
of the discrete-time system
\[
	x(t+1) = Ax(t) + By(t),\quad t\in\Z,
\]
where $A$ is Schur stable, i.e., all the eigenvalues of $A$ are strictly inside $\T$, and the pair $(A,B)$ is reachable. Note that the $n$-dimensional process $x(t):=[x_{1}(t),x_{2}(t),\dots,x_{n}(t)]^{\top}$ coincides with the output of a bank of $n$ filters $G(z):=[g_{1}(z),g_{2}(z),\dots,g_{n}(z)]^{\top}$ fed by $y(t)$.

Suppose we know the steady-state covariance of the process $x(t)$, which we denote by $\Sigma>0$.\footnote{On the problem of estimating covariance matrices from measurements obtained by linear filtering see also \cite{ZF12,FPZ12}.}
Given the estimate $\Psi(e^{j\theta})$ and the steady-state covariance $\Sigma$, the task is to estimate the spectral density of the process $y(t)$. To this end, we need to find the spectral density $\hat\Phi(e^{j\theta})\in\mathcal{C}_{+}(\T)$ which is the ``closest possible'', in a suitable sense, to the a-priori estimate $\Psi$ among all spectra $\Phi(e^{j\theta})\in\mathcal{C}_{+}(\T)$ satisfying the constraint
\[
	\int_{-\pi}^{\pi}G(e^{j\theta})\Phi(e^{j\theta})G^{*}(e^{j\theta})\frac{\de \theta}{2 \pi}=\int G_\theta\Phi_\theta G^{*}_\theta =\Sigma.
\]
(In order to lighten the notation, throughout the paper we let $G_{\theta}:=G(e^{j\theta})$, $\Phi_{\theta}:=\Phi(e^{j\theta})$, $\Psi_{\theta}:=\Psi(e^{j\theta})$, and for integrals we use the above shorthand, where the integration takes place on the unit circle w.r.t. the normalized Lebesgue measure.)
By using the Kullback--Leibler divergence \cite{CJ12} as ``measure of closeness''  between spectral densities, namely,
\[
	\KL(\Phi_\theta\| \Psi_\theta):=\int\Psi_\theta\log\frac{\Psi_\theta}{\Phi_\theta},
\] 
the problem can be formally stated as follows.

\begin{Problem}[Georgiou--Lindquist approximation problem \cite{GL03}] \label{prob1}
Let $\Psi_\theta\in \mathcal{C}_{+}(\T)$ and $\Sigma\in\mathbb{H}_{n}$, $\Sigma>0$. Find $\hat\Phi_\theta \in\mathcal{C}_{+}(\T)$ that solves
\[
	\min_{\Phi_\theta\in\mathcal{K}} \ \KL(\Phi_\theta\| \Psi_\theta),
\]
where 
\begin{align}\label{eq:setC}
	\mathcal{K}:=\left\{\,\Phi_\theta\in \mathcal{C}_{+}(\T) \, :\, \int G_\theta\Phi_\theta G^{*}_\theta =\Sigma\,\right\}.
      \end{align}
\end{Problem}

The variational analysis outlined in \cite{GL03} (see also \cite{PF06,FRT11} where some additional details are spelled out and \cite{G01,G02} for the existence part) leads to the following result.

\begin{Theorem}
The set $\mathcal{K}$ as defined in \eqref{eq:setC} is non-empty if and only if there exists $H\in\C^{1\times n}$ s.t.
\begin{align}\label{eq:feasab}
	\Sigma -A\Sigma A^{*} = BH+H^{*} B^{*}. 
\end{align}
Moreover, assuming that the above condition is fulfilled, there exists $\hat{\Lambda}\in\mathbb{H}_{n}$ such that
\begin{align}
	& G^{*}_\theta \hat{\Lambda} G_\theta >0, \quad \forall\,\theta\in[-\pi,\pi),\label{cond1}\\
	& \int G_\theta\frac{\Psi_\theta}{G^{*}_\theta\hat{\Lambda} G_\theta}G^{*}_\theta =\Sigma.\label{cond2}
\end{align}
For any such $\hat{\Lambda}$,
\begin{align}\label{eq:spest}
	\hat{\Phi}_\theta:=\frac{\Psi_\theta}{G^{*}_\theta\hat{\Lambda} G_\theta}
\end{align}
is the unique solution of Problem \ref{prob1}.
\end{Theorem}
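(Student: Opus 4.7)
\medskip

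\noindent\emph{Proof plan.} The plan is to treat the three claims in order: feasibility of the primal constraint, existence of a multiplier $\hat\Lambda$, and uniqueness of the solution via Lagrange duality and strict convexity.

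First, for the feasibility characterization \eqref{eq:feasab}, I would work directly with the structure $G_\theta=(e^{j\theta}I-A)^{-1}B$. Exploiting the identity $A(e^{j\theta}I-A)^{-1}=e^{j\theta}(e^{j\theta}I-A)^{-1}-I$, I would compute
\[
A\Sigma A^{*}=\int(e^{j\theta}G_\theta-B)\Phi_\theta(e^{-j\theta}G^{*}_\theta-B^{*})
\]
for any $\Phi_\theta\in\mathcal{K}$, and then subtract from $\Sigma=\int G_\theta\Phi_\theta G^{*}_\theta$. The cross term $\int G_\theta\Phi_\theta G^{*}_\theta$ cancels, leaving a factorization $BH+H^{*}B^{*}$ with the explicit choice $H:=\int e^{-j\theta}\Phi_\theta G^{*}_\theta-\tfrac{1}{2}B^{*}\int\Phi_\theta$. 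This proves necessity. For sufficiency, given such an $H$, one exhibits an explicit element of $\mathcal{K}$ (for instance a constant spectrum augmented by a trigonometric polynomial correction determined by $H$ through the reachability of $(A,B)$); alternatively, one invokes the affine/convex image of a suitable positive cone and the standard Carath\'eodory-type range description of the moment map $\Phi\mapsto\int G_\theta\Phi_\theta G^{*}_\theta$.

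Next, for the existence of $\hat\Lambda$ satisfying \eqref{cond1}--\eqref{cond2}, I would set up the Lagrangian
\[
\mathcal{L}(\Phi_\theta,\Lambda)=\int\Psi_\theta\log\frac{\Psi_\theta}{\Phi_\theta}+\left\langle\Lambda,\Sigma-\int G_\theta\Phi_\theta G^{*}_\theta\right\rangle
\]
with $\Lambda\in\mathbb{H}_{n}$. Pointwise minimization in $\Phi_\theta\in\mathcal{C}_{+}(\T)$ gives, via the first-order condition $-\Psi_\theta/\Phi_\theta=-G^{*}_\theta\Lambda G_\theta$, the candidate $\Phi_\theta=\Psi_\theta/(G^{*}_\theta\Lambda G_\theta)$, well defined only when $G^{*}_\theta\Lambda G_\theta>0$ on $\T$. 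Substituting back produces the concave dual functional
\[
J(\Lambda)=\langle\Lambda,\Sigma\rangle-\int\Psi_\theta\log(G^{*}_\theta\Lambda G_\theta)-\int\Psi_\theta\,(1-\log\Psi_\theta),
\]
to be maximized over the open convex set $\{\Lambda\in\mathbb{H}_{n}:G^{*}_\theta\Lambda G_\theta>0\ \forall\,\theta\}$. Stationarity of $J$ is precisely condition \eqref{cond2}. The key analytic step is to show that a maximizer $\hat\Lambda$ exists, i.e., that $J$ is coercive on the feasibility cone: this uses the feasibility hypothesis \eqref{eq:feasab} to rule out escape to the boundary and to infinity, together with the continuity and coercivity of $\Psi_\theta$. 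The existence argument is the main obstacle; the delicate point is ruling out that a maximizing sequence $\Lambda_k$ drives $G^{*}_\theta\Lambda_k G_\theta$ to zero on some subset of $\T$ of positive measure, which is handled by a careful compactness/contradiction argument based on \eqref{eq:feasab}, as detailed in \cite{G01,G02}.

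Finally, for uniqueness of $\hat\Phi_\theta$: the primal objective $\Phi_\theta\mapsto\KL(\Phi_\theta\|\Psi_\theta)$ is strictly convex on $\mathcal{C}_{+}(\T)$ (strict convexity of $x\mapsto-\log x$), and $\mathcal{K}$ is convex, so any primal minimizer is unique. Any $\hat\Lambda$ fulfilling \eqref{cond1}--\eqref{cond2} produces $\hat\Phi_\theta$ by \eqref{eq:spest}, which is feasible by \eqref{cond2} and achieves the KKT conditions; standard convex duality then ensures it is the unique primal optimum, independently of which $\hat\Lambda$ is selected. Strong duality needs only to be verified under a Slater-type condition, which again follows from the non-emptiness of $\mathcal{K}$ supplied by the first part of the statement.
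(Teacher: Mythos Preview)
The paper does not prove this theorem; it is quoted from the literature, with the sentence preceding the statement attributing it to the variational analysis in \cite{GL03} (with details in \cite{PF06,FRT11} and the existence part in \cite{G01,G02}). Your Lagrangian/duality sketch is exactly that variational route: the pointwise minimization yielding $\Phi_\theta=\Psi_\theta/(G^{*}_\theta\Lambda G_\theta)$, the dual functional $J(\Lambda)$ coinciding (up to sign and an additive constant) with the paper's $\J(\Lambda)$ in \eqref{eq:PFcostold}, and stationarity giving \eqref{cond2}. So at the level of approach you are aligned with what the paper invokes.

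Two places in your outline are genuinely thin, and they are precisely the spots the paper outsources. First, for the feasibility equivalence your necessity computation is clean and correct, but the sufficiency direction (``one exhibits an explicit element of $\mathcal{K}$'' or ``invokes a Carath\'eodory-type range description'') is not an argument yet; the actual characterization of $\mathrm{Range}\,\Gamma$ by the Lyapunov-type condition \eqref{eq:feasab} is a structural result of Georgiou \cite{G01,G02} and needs the reachability of $(A,B)$ in an essential way. Second, the existence of a maximizer $\hat\Lambda$ in the open cone $\{\Lambda:G^{*}_\theta\Lambda G_\theta>0\}$ is the real analytic content: one must show that along any boundary-approaching or unbounded sequence the dual value deteriorates, and this requires more than ``feasibility rules out escape''---it is handled in \cite{GL03} by restricting to $\mathrm{Range}\,\Gamma$ (where the minimizer is unique) and by a barrier-type argument near the boundary. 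Your citation of \cite{G01,G02} for this step is appropriate, but be aware that this is where the work lies, not a routine compactness check.
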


Supposing the feasibility condition \eqref{eq:feasab} satisfied, in view of the above theorem, Problem \ref{prob1} can be reduced to the problem of finding $\Lambda\in\mathbb{H}_{n}$ satisfying conditions \eqref{cond1}-\eqref{cond2}.

In \cite{GL03}, Georgiou and Lindquist exploited duality theory to arrive at the equivalent convex optimization problem
\begin{align}\label{eq:GLprob}
\min_{\Lambda\in\Lp}\ \J(\Lambda),
\end{align}
where
\begin{align}\label{eq:PFcostold}
	\J\colon &\Lp\to\R\notag \\
	 &\phantomarrow{\Lp}{\Lambda} \tr(\Lambda\Sigma)-\int \Psi_\theta \log G^{*}_\theta \Lambda G_\theta,
\end{align}
and 
\begin{align*}
	&\Lp:=\{\,\Lambda\in\mathbb{H}_{n} \, :\, G^{*}_\theta\Lambda G_\theta>0, \ \forall \theta\in[-\pi,\pi)\,\}.
\end{align*}
In \cite[Thm.~5]{GL03} it has been established that the above dual problem admits a unique solution $\hat{\Lambda}$ on $\Lp(\Gamma):=\Lp\,\cap\, \mathrm{Range}\,\Gamma$,\footnote{It is worth observing however that Problem \eqref{eq:GLprob} has in general infinitely many solutions on $\Lp$.} where $\Gamma$ is the linear operator defined by  
\begin{align}\label{eq:Gamma}
	\Gamma\colon &\mathcal{C}(\T)\to\mathbb{H}_{n}\notag \\
	 &\phantomarrow{\mathcal{C}(\T)}{\Phi_\theta} \int G_\theta \Phi_\theta G^*_\theta,
\end{align}
and such a solution satisfies \eqref{cond2}, so that $\hat{\Lambda}$ returns the optimal estimate $\hat{\Phi}_\theta$ via \eqref{eq:spest}. 
After a suitable parametrization of $\mathcal{L}(\Gamma)$, problem \eqref{eq:GLprob} can be numerically solved using Newton-like minimization methods \cite[Sec.~VII]{GL03}. However, as mentioned in the introduction, these techniques are affected by several numerical issues related to unboundedness of the gradient of $\J(\cdot)$ around the neighborhood of the boundary and high computational burden due to a large number of backstepping iterations \cite[Sec.~VII]{GL03}, \cite{PF06,FPR07,FRT11}. 

%


\subsection{The Pavon--Ferrante algorithm}\label{subsec:PF}

An alternative, numerically robust algorithm for the solution of Problem \ref{prob1} has been proposed by Pavon and Ferrante in \cite{PF06} and further investigated in \cite{FPR07,FRT11}. Before presenting the algorithm, we introduce some simplifications in the formulation of Problem \ref{prob1}; namely, we suppose
\begin{enumerate}[(i)]
\item the a-priori estimate $\Psi_\theta$ to be such that $\int\Psi_\theta=1$, and
\item the steady-state covariance $\Sigma$ to be normalized to identity, i.e., $\Sigma=I$.
\end{enumerate}
These assumptions can be made without any loss of generality, as explained in \cite[Remark 2.3]{FRT11}. Furthermore, notice that, with these simplifications in place, the cost functional $\J(\cdot)$ in \eqref{eq:PFcostold} becomes
\begin{align}\label{eq:PFcost}
	\J(\Lambda)&=\tr(\Lambda)-\int \Psi_\theta \log G^{*}_\theta \Lambda G_\theta.
\end{align}
The Pavon--Ferrante algorithm is a fixed-point iteration of the form
\begin{align}\label{eq:PFalg}
	&\Lambda_{k+1}=\Theta(\Lambda_{k}):= \int \Lambda_{k}^{1/2}G_\theta\left(\frac{\Psi_\theta}{G^{*}_\theta\Lambda_{k} G_\theta}\right)G^{*}_\theta\Lambda_{k}^{1/2},
\end{align}
for $k\in\Z$, $k\geq 0$, where the initialization is taken to be a positive definite trace-one matrix $\Lambda_{0}>0$. Iteration \eqref{eq:PFalg} features several remarkable properties. Firstly, it preserves unit trace and positivity. Furthermore, by introducing the sets
\begin{align}\label{eq:L+}
	\mathcal{M} &:=\{\,\Lambda\in\mathfrak{S}_{n} \, :\, G^{*}_\theta\Lambda G_\theta>0,\ \forall \theta\in[-\pi,\pi)\,\},\\
	\mathcal{M}_+ &:=\{\,\Lambda\in \mathcal{M}\,:\, \Lambda>0\}\subset \M,
\end{align}
it has been shown in \cite[Thm. 4.1]{PF06} that $\Theta(\cdot)$ maps elements of $\M$ ($\M_+$) into elements of $\M$ ($\M_+$, respectively).

Secondly, and most importantly, if iteration \eqref{eq:PFalg} converges to a \emph{positive definite} fixed point of $\Theta(\cdot)$, say $\hat{\Lambda}>0$, then $G^{*}_\theta \hat{\Lambda} G_\theta >0$, $\forall\,\theta\in[-\pi,\pi)$, and, by multiplying Eq.~\eqref{eq:PFalg} on both sides by $\hat{\Lambda}^{-1/2}$, 
\[
	\int G_\theta\frac{\Psi_\theta}{G^{*}_\theta\hat{\Lambda} G_\theta}G^{*}_\theta = I,
\]
so that conditions \eqref{cond1}-\eqref{cond2} are satisfied. As a consequence, if the feasibility condition in \eqref{eq:feasab} is satisfied by $\Sigma=I$, such a $\hat{\Lambda}$ yields the solution of Problem \ref{prob1} via \eqref{eq:spest}.
Importantly, such a fixed point always exists. In fact, let $\mathcal{S}$ denote the space of $\Lambda\in\mathbb{H}_{n}$ satisfying \eqref{cond1}-\eqref{cond2}, in \cite[Thm.~3.2]{FRT11} it has been shown that the set of positive definite fixed points of iteration \eqref{eq:PFalg} defines a non-empty open convex set $\mathcal{P}$ of the space $\mathcal{S}$. To conclude, we remark that the positive definite ones are not the only fixed points of iteration \eqref{eq:PFalg} which provide the solution to Problem \ref{prob1}. Indeed, in the closure of $\mathcal{P}$ there exist singular elements which still satisfy \eqref{cond1}-\eqref{cond2} and, thus, solve Problem \ref{prob1} via \eqref{eq:spest}, see \cite[Sec.~II-B]{FRT11}. In general, however, singular fixed points are not guaranteed to satisfy conditions \eqref{cond1}-\eqref{cond2} and, therefore, to solve Problem \ref{prob1} via \eqref{eq:spest}.

\section{Preliminary results}\label{sec:prelim}

In this section, we collect some auxiliary results which will be used in the proof of the main theorems presented in the next section. The first result is a consequence of Jensen's inequality \cite[Thm. 2.6.2]{CJ12}.
\begin{Lemma}
\label{lem1}
Let $X\subseteq \R$. Consider an integrable function $f\colon X\to \R_{>0}$ and an integrable function $w\colon X\to \R_{>0}$ satisfying $\int_{X}w(x)\,\de x=1$, then
\[
	\log\int_{X} w(x) f(x)\, \de x\ge \int_{X} w(x) \log f(x)\,\de x,
\]
and the equality is attained if and only if $f(x)$ is constant a.e. on $X$.
\end{Lemma}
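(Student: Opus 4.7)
The plan is to deduce the inequality directly from the strict concavity of the logarithm, which is what Jensen's inequality encodes. Since $w$ is a strictly positive probability density on $X$, the measure $d\mu(x):=w(x)\,\de x$ is a probability measure, and $f$ may be viewed as a positive random variable on $(X,d\mu)$. Applying the standard form of Jensen's inequality to the concave function $\log\colon\R_{>0}\to\R$ then yields
\[
\log\!\int_X w(x)f(x)\,\de x \;=\; \log \mathbb{E}_\mu[f] \;\ge\; \mathbb{E}_\mu[\log f] \;=\; \int_X w(x)\log f(x)\,\de x,
\]
which is precisely the desired inequality.

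For a self-contained argument, I would instead prove the inequality from the elementary estimate $\log t \le t-1$ valid for all $t>0$, with equality if and only if $t=1$. Setting $\mu:=\int_X w(x)f(x)\,\de x>0$ and applying this bound pointwise to $t=f(x)/\mu$, one has $\log(f(x)/\mu)\le f(x)/\mu - 1$. Multiplying by $w(x)>0$ and integrating over $X$, the normalization $\int_X w(x)\,\de x=1$ gives
\[
\int_X w(x)\log f(x)\,\de x - \log\mu \;\le\; \frac{1}{\mu}\int_X w(x)f(x)\,\de x - 1 \;=\; 0,
\]
which rearranges to the claimed inequality. This route has the advantage of making the equality analysis transparent.

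The only subtle step is the characterization of equality, so I would treat it explicitly. Since the pointwise estimate $\log t\le t-1$ is strict whenever $t\neq 1$, and since $w(x)>0$ on $X$, equality in the integrated inequality forces $f(x)/\mu=1$ for almost every $x\in X$. Conversely, if $f$ equals a constant $c>0$ almost everywhere, then both sides of the inequality reduce to $\log c$. This yields the if-and-only-if statement.

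I do not expect a serious obstacle here; the only mild care required is to ensure that all the quantities involved are well defined, i.e.\ that $\mu>0$ (guaranteed by $f,w>0$) and that $\log f$ is integrable with respect to $w\,\de x$, which is the tacit hypothesis needed for the right-hand side to make sense.
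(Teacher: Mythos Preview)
Your proposal is correct and matches the paper's approach: the paper simply states that the lemma is a consequence of Jensen's inequality and does not give a detailed proof, so your invocation of Jensen (together with the self-contained $\log t\le t-1$ argument and the equality analysis) is at least as complete as what the paper provides.
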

%


Another ancillary lemma is stated and proved below.

\begin{Lemma}\label{lem2}
Let $X\subseteq \R$. Let $w\colon X\to \R$ and $f \colon X\times X\to\R_{\geq 0}$ be integrable functions with $f(\cdot,\cdot)$ symmetric, i.e. $f(x,y)=f(y,x)$ for all $x,y\in X$. Then it holds
\begin{align}\label{eq:lem2}
\int_{X} \int_{X}w(x)^{2} f(x,y)\, \de x\, \de y\geq \int_{X} \int_{X} w(x)w(y) f(x,y)\, \de x\,\de y.
\end{align}
\end{Lemma}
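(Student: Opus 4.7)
The plan is to exploit the symmetry of $f$ together with the elementary inequality $a^{2}+b^{2}\ge 2ab$. First I would observe that, by the symmetry hypothesis $f(x,y)=f(y,x)$, the double integral on the left-hand side of \eqref{eq:lem2} is invariant under swapping the roles of the integration variables $x$ and $y$. That is,
\[
\int_{X}\int_{X} w(x)^{2} f(x,y)\,\de x\,\de y \;=\; \int_{X}\int_{X} w(y)^{2} f(x,y)\,\de x\,\de y,
\]
where the equality is obtained by relabelling $x\leftrightarrow y$ in the integral and then invoking $f(y,x)=f(x,y)$.

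Next, averaging the two equivalent expressions for the left-hand side, I would rewrite it as
\[
\int_{X}\int_{X} w(x)^{2} f(x,y)\,\de x\,\de y \;=\; \int_{X}\int_{X} \frac{w(x)^{2}+w(y)^{2}}{2}\, f(x,y)\,\de x\,\de y.
\]
Finally, using the pointwise bound $\tfrac{1}{2}\bigl(w(x)^{2}+w(y)^{2}\bigr)\ge w(x)w(y)$, which is simply a restatement of $(w(x)-w(y))^{2}\ge 0$, together with the non-negativity of $f$, one integrates termwise to conclude
\[
\int_{X}\int_{X} \frac{w(x)^{2}+w(y)^{2}}{2}\, f(x,y)\,\de x\,\de y \;\ge\; \int_{X}\int_{X} w(x) w(y) f(x,y)\,\de x\,\de y,
\]
which is exactly \eqref{eq:lem2}.

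I do not anticipate a serious obstacle here: the statement is essentially a symmetrization/AM--GM argument, and the only subtlety is to justify the use of Fubini's theorem in swapping the variables, which is immediate given that both $w$ and $f$ are assumed integrable (and $f\ge 0$ makes the iterated integrals well-defined regardless of the order).
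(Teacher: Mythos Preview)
Your proof is correct and follows essentially the same approach as the paper: symmetrize the left-hand side via the change of variables $x\leftrightarrow y$ to obtain the integrand $\tfrac{1}{2}(w(x)^{2}+w(y)^{2})f(x,y)$, and then apply the pointwise inequality $(w(x)-w(y))^{2}\ge 0$ together with $f\ge 0$. The only cosmetic difference is that you spell out the relabelling and Fubini justification slightly more explicitly than the paper does.
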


\begin{proof}
Thanks to the symmetry of $f(\cdot,\cdot)$, we have
\begin{align}
& \int_{X} \int_{X}w(x)^{2} f(x,y)\, \de x\,\de y =\int_{X} \int_{X}\left(\frac{w(x)^{2}+w(y)^{2}}{2}\right) f(x,y)\, \de x\,\de y.\notag
\end{align}
Now, since $w(x)^{2}+w(y)^{2}\geq 2w(x)w(y)$, due to the fact that $(w(x)-w(y))^{2}\geq 0$, for all $x,y\in X$, the claim follows.
\end{proof}

{\check

Next we focus the attention on the function $\J(\cdot)$, as defined in Eq.~\eqref{eq:PFcost}. This function will play a key role in the convergence analysis presented in Section \ref{sec:global}. 
\begin{Lemma}\label{lem:cont}
$\J(\cdot)$ is a continuous and bounded function on $\mathfrak{S}_{n}$.
\end{Lemma}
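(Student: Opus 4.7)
The trace term $\tr(\Lambda)=1$ is constant on $\mathfrak{S}_n$, so the analysis reduces to $F(\Lambda):=\int\Psi_\theta\log G^*_\theta\Lambda G_\theta\,d\theta$. My plan is to establish uniform two-sided bounds on $F$ over $\mathfrak{S}_n$ and then upgrade pointwise convergence along sequences to continuity, after which boundedness is automatic by compactness of $\mathfrak{S}_n$.

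For the \emph{upper bound} on $F$ I would apply Lemma~\ref{lem1} with $w=\Psi_\theta$ (a probability density, by the normalisation $\int\Psi_\theta\,d\theta=1$) and $f(\theta)=G^*_\theta\Lambda G_\theta\ge 0$, obtaining
\[
F(\Lambda)\leq\log\int\Psi_\theta G^*_\theta\Lambda G_\theta\,d\theta=\log\tr(\Lambda\Sigma_\Psi),
\]
with $\Sigma_\Psi:=\int\Psi_\theta G_\theta G^*_\theta\,d\theta$. Reachability of $(A,B)$ together with $\Psi\in\mathcal{C}_+(\T)$ makes $\Sigma_\Psi>0$, and since $\tr(\Lambda\Sigma_\Psi)\leq\|\Sigma_\Psi\|_{\mathrm{op}}$ on $\mathfrak{S}_n$ the bound is uniform.

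For the \emph{lower bound} on $F$, I would spectrally decompose $\Lambda=\sum_i\lambda_iv_iv_i^*$ (with $\lambda_i\ge 0$, $\sum_i\lambda_i=1$, $\{v_i\}$ orthonormal) and apply the concavity of $\log$ to the sum:
\[
\log G^*_\theta\Lambda G_\theta=\log\sum_i\lambda_i|v_i^*G_\theta|^2\geq\sum_i\lambda_i\log|v_i^*G_\theta|^2.
\]
This reduces the lower bound to showing $\Phi(v):=\int\Psi_\theta\log|v^*G_\theta|^2\,d\theta$ is bounded below on the compact unit sphere. For $\|v\|=1$ one has $v^*G_\theta=p_v(e^{j\theta})/d(\theta)$, where $d(\theta):=\det(e^{j\theta}I-A)$ is bounded away from zero on $\T$ (since $A$ is Schur) and $p_v(z):=v^*\mathrm{adj}(zI-A)B$ is a non-trivial polynomial of degree $\leq n-1$ (by reachability), so $\Phi(v)$ is finite. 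I would prove continuity of $\Phi$ on the unit sphere by expressing $\int\log|p_v(e^{j\theta})|^2\,d\theta/(2\pi)$ via Jensen's classical formula in terms of the leading coefficient and zeros of $p_v$ (both of which depend continuously on $v$), and then transferring the same structural analysis to the $\Psi$-weighted integral by noting that $z\mapsto\int\Psi_\theta\log|e^{j\theta}-z|^2\,d\theta$ is continuous across $\T$ (a dominated-convergence check, using the uniform bound $|e^{j\theta}-z|^2\geq |e^{j\theta}-z_0|^2/2$ in a neighbourhood of any $z_0\in\T$).

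The main obstacle is to extend this continuity argument to $F$ on all of $\mathfrak{S}_n$, in particular at singular boundary $\Lambda$'s where $G^*_\theta\Lambda G_\theta$ can vanish at isolated points of $\T$. For $\Lambda_k\to\Lambda$, the pointwise convergence $\log G^*_\theta\Lambda_k G_\theta\to\log G^*_\theta\Lambda G_\theta$ a.e.\ together with the $\Psi$-integrable upper envelope $\log\|G_\theta\|^2$ (valid since $\|\Lambda\|_{\mathrm{op}}\leq 1$ on $\mathfrak{S}_n$) immediately yields $\limsup_k F(\Lambda_k)\leq F(\Lambda)$ by reverse Fatou. The reverse inequality is the delicate step: no pointwise integrable lower envelope holds uniformly in $k$, since $\log G^*_\theta\Lambda_k G_\theta$ can dip arbitrarily low near the (finitely many) zeros of $G^*_\theta\Lambda_k G_\theta$. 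My plan is to Fej\'er--Riesz factor $\tilde G^*_\theta\Lambda_k\tilde G_\theta=|q_k(e^{j\theta})|^2$, where $\tilde G_\theta:=\mathrm{adj}(e^{j\theta}I-A)B$, exploit the fact that non-negative trigonometric polynomials of degree $\leq 2(n-1)$ have only even-order zeros on $\T$ to obtain continuous dependence of the outer factor $q_k$ on $\Lambda_k$, and then apply Jensen's formula together with the continuity of $z\mapsto\int\Psi_\theta\log|e^{j\theta}-z|^2\,d\theta$ established above, to conclude $F(\Lambda_k)\to F(\Lambda)$. With continuity in hand, boundedness of $\mathcal{J}$ follows at once from the compactness of $\mathfrak{S}_n$.
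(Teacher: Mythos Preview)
Your route is substantially different from the paper's. The paper dispatches boundedness in one stroke by observing that, for every $\Lambda\in\mathfrak{S}_n$, the function $G^*_\theta\Lambda G_\theta$ is a \emph{nonzero rational spectral density} analytic on $\T$ (by Schur stability of $A$ and reachability of $(A,B)$), so $\log G^*_\theta\Lambda G_\theta$ is integrable by a classical fact (Rozanov). For continuity, the paper invokes a result of Nurdin stating that if a sequence of such rational spectral densities converges uniformly on $\T$ to a nonzero limit, then the sequence of their logarithms is \emph{uniformly integrable}; Vitali's convergence theorem then gives $\lim_k\int\Psi_\theta\log G^*_\theta\Lambda_k G_\theta=\int\Psi_\theta\log G^*_\theta\bar\Lambda G_\theta$ directly. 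Your approach via explicit two–sided bounds, Jensen's classical formula, and continuity of Fej\'er--Riesz factorization is more elementary in spirit, but in effect you are sketching hands-on proofs of precisely the integrability and uniform-integrability facts that the paper simply cites.

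There is one concrete gap in your sketch. The dominated-convergence bound you propose, $|e^{j\theta}-z|^2\geq\tfrac12|e^{j\theta}-z_0|^2$ for $z$ near $z_0\in\T$, is false: take $e^{j\theta}=z\neq z_0$ and the left side vanishes while the right side is positive. Continuity of $z\mapsto\int\Psi_\theta\log|e^{j\theta}-z|^2$ across $\T$ \emph{is} true, but it is a property of logarithmic potentials of measures with bounded density and needs a genuine uniform-integrability (or equicontinuity-of-potentials) argument --- which brings you back essentially to the Nurdin-type statement the paper cites. The same comment applies to your continuity claim for the outer factor in the Fej\'er--Riesz step: this is correct but is itself a nontrivial spectral-factorization continuity result, not something a one-line root-continuity remark delivers (note also that the degree of $p_v$ can drop on a subvariety of the sphere, so the ``leading coefficient and roots depend continuously on $v$'' formulation needs care). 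Your plan can be made rigorous, but as written these two steps are where the actual work lies.
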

\begin{proof}
We first note that, in view of the stability of $A$ and reachability of the pair $(A,B)$, for all $\Lambda\in\mathfrak{S}_{n}$, $G^{*}_{\theta}\Lambda G_{\theta}$ is a nonzero rational spectral density analytic on (an open annulus containing) $\T$. This in turn implies that $\log G^{*}_{\theta}\Lambda G_{\theta}$ is integrable on $\T$, see e.g. \cite[p.~64]{rozanov1967stationary}. Since $\Psi_{\theta}$ is bounded on $\T$, $\Psi_{\theta}\log G^{*}_{\theta}\Lambda G_{\theta}$ is again integrable on $\T$. This in turn implies that $\J(\cdot)$ is bounded on $\mathfrak{S}_{n}$. Now let $\bar{\Lambda}\in\mathfrak{S}_{n}$ and consider any sequence $\{\Lambda_{k}\}_{k\ge 0}$, in $\mathfrak{S}_{n}$ such that $\lim_{k\to\infty}\Lambda_{k}=\bar{\Lambda}$. 
 The corresponding sequence $\{G^{*}_{\theta}\Lambda_{k} G_{\theta}\}_{k\ge 0}$ is composed of nonzero rational spectral densities analytic on $\T$ and such that $\lim_{k\to\infty}G^{*}_{\theta}\Lambda_{k} G_{\theta}=G^{*}_{\theta}\bar{\Lambda} G_{\theta}$ uniformly on $\T$, where the limit $G^{*}_{\theta}\bar{\Lambda} G_{\theta}$ is a spectral density as before. Hence, from \cite[Cor.~4.6]{nurdin2006spectral}, it follows that the sequence $\{\log G^{*}_{\theta}\Lambda_{k} G_{\theta}\}_{k\ge 0}$ is uniformly integrable on $\T$. Eventually, since $\Psi_{\theta}$ is bounded on $\T$, $\{ \Psi_\theta\log G^{*}_{\theta}\Lambda_{k} G_{\theta}\}_{k\ge 0}$ is again uniformly integrable, so that by Vitali's convergence theorem \cite[p.~133]{rudin1987real} it holds
\begin{align*}
	\lim_{k\to\infty} \J(\Lambda_{k})&=1-\lim_{k\to\infty}\int \Psi_\theta \log G^{*}_\theta \Lambda_{k} G_\theta\\
	& = 1-\int \lim_{k\to\infty} \Psi_\theta \log G^{*}_\theta \Lambda_{k} G_\theta\\
	& = 1-\int\Psi_\theta \log G^{*}_\theta \bar{\Lambda} G_\theta = \J(\bar{\Lambda}),
\end{align*}
which proves continuity.
\end{proof}



Consider the orthogonal complement (w.r.t. the trace inner product in $\mathbb{H}_{n}$) of $\mathrm{Range}\,\Gamma$, where the linear operator $\Gamma$ has been defined in \eqref{eq:Gamma}. This quantity has been shown in \cite[Sec.~IV-A]{FPR08} to be given by
\begin{align}\label{eq:rangeG}
	&(\mathrm{Range}\,\Gamma)^{\perp} = \{\,X\in\mathbb{H}_{n}\,:\, G^{*}_{\theta}XG_{\theta}=0,\ \forall \theta\in[-\pi,\pi)\,\}.
\end{align}
Given $\Lambda\in\mathfrak{S}_{n}$ and any non-zero matrix $\Lambda^{\perp}\in(\mathrm{Range}\,\Gamma)^{\perp}$ such that $\Lambda+\Lambda^{\perp}\geq 0$, we observe that $\J(\Lambda) = \J(\Lambda+\Lambda^{\perp})$ and $\Lambda+\Lambda^{\perp}\in \mathfrak{S}_{n}$, since every element in $(\mathrm{Range}\,\Gamma)^{\perp}$ is traceless \cite[Sec. II]{FRT11}.

At this point, we analyze the behavior of $\J(\cdot)$ in the region of the boundary of $\mathfrak{S}_{n}$ defined by 
\begin{align}
	\N:=\{\,\Lambda\in\mathfrak{S}_{n} \, :\, \exists\, \bar{\theta}\in[-\pi,\pi) \text{ s.t. } G^{*}_{\bar{\theta}}\Lambda G_{\bar{\theta}}=0\,\}.
\end{align}
The following lemma provides a useful result in this regard.
\begin{Lemma}\label{lem:infcomp}
Suppose $\Psi_{\theta}\in\mathcal{C}_{+}(\T)$. For all $\bar{\Lambda}\in\N$, the (right-sided) directional derivative of $\J(\cdot)$ at $\bar{\Lambda}$ along any direction $\Delta\bar{\Lambda}\in\mathbb{H}_{n}$ such that $\bar\Lambda+\Delta\bar{\Lambda}\in\M$ takes the value $-\infty$.
\end{Lemma}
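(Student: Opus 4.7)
Setting $p_\theta := G^{*}_\theta \bar{\Lambda} G_\theta$ and $q_\theta := G^{*}_\theta \Delta\bar{\Lambda} G_\theta$, the definition \eqref{eq:PFcost} of $\J(\cdot)$ gives, for $t>0$,
\begin{align*}
\frac{\J(\bar{\Lambda} + t\,\Delta\bar{\Lambda}) - \J(\bar{\Lambda})}{t} = \tr(\Delta\bar{\Lambda}) - \int \Psi_\theta\, h_t(\theta), \qquad h_t(\theta) := \frac{\log(p_\theta + t\,q_\theta) - \log p_\theta}{t}.
\end{align*}
Since $\tr(\Delta\bar{\Lambda})$ is a fixed finite number, the entire claim reduces to proving that $\int \Psi_\theta\, h_t \to +\infty$ as $t \searrow 0$.

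The main step is an exchange of limit and integral via monotone convergence. For each $\theta$ with $p_\theta > 0$ the map $t \mapsto \log(p_\theta + t\,q_\theta)$ is concave, so $t \mapsto h_t(\theta)$ is non-increasing on $(0,\infty)$ and $h_t(\theta) \nearrow q_\theta/p_\theta$ as $t \searrow 0$. For $0 < t \leq 1$ one has the uniform lower bound $h_t \geq h_1 = \log(p_\theta + q_\theta) - \log p_\theta$, and $h_1$ is integrable on $\T$ because $\log(p_\theta + q_\theta)$ and $\log p_\theta$ are both logarithms of non-zero rational spectral densities analytic on $\T$ (the same fact already used in the proof of Lemma \ref{lem:cont}). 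Applying the monotone convergence theorem to the non-negative increasing family $\Psi_\theta\,(h_t - h_1)$ then yields
\begin{align*}
\lim_{t \searrow 0}\int \Psi_\theta\, h_t = \int \Psi_\theta\, \frac{q_\theta}{p_\theta} \in (-\infty, +\infty].
\end{align*}

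To conclude, one shows that the right-hand side equals $+\infty$. Writing $p_\theta = \|\bar{\Lambda}^{1/2} G_\theta\|^{2}$ exhibits $p_\theta$ as a non-negative real-analytic function on $\T$, so every zero of $p$ has even order $2m \geq 2$. The hypothesis $\bar{\Lambda} + \Delta\bar{\Lambda} \in \M$ forces $p_\theta + q_\theta > 0$ on $\T$; at any zero $\bar\theta$ of $p$ this gives $q_{\bar\theta} > 0$, and by continuity $q_\theta$ is bounded away from zero on a neighborhood of $\bar\theta$. Combined with $\Psi_{\bar\theta} > 0$, the integrand $\Psi_\theta\, q_\theta/p_\theta$ exhibits a non-integrable singularity of the form $(\theta - \bar\theta)^{-2m}$ at $\bar\theta$, so the integral equals $+\infty$. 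The directional derivative is therefore $\tr(\Delta\bar{\Lambda}) - \infty = -\infty$, as claimed. The main delicate step I foresee is the even-order claim for the zeros of $p_\theta$, which is what guarantees that the singularity of $q_\theta/p_\theta$ is genuinely non-integrable (order $\geq 2$ rather than merely $\geq 1$); the rest amounts to verifying the MCT hypotheses and invoking the Szeg\H{o}-type integrability of $\log$ of rational spectral densities already exploited in Lemma \ref{lem:cont}.
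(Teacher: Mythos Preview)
Your proof is correct and follows essentially the same route as the paper: both compute the directional derivative as $\tr(\Delta\bar{\Lambda})-\int\Psi_\theta\,q_\theta/p_\theta$ and then argue that the integral diverges because $p_\theta$ vanishes at some $\bar\theta$ while $q_{\bar\theta}>0$. The paper is terser---it passes to the limit inside the integral by a formal first-order Taylor expansion of $\log(1+\varepsilon\cdot)$ and simply asserts divergence---whereas you supply the monotone-convergence justification for the limit exchange and the even-order-zero argument for non-integrability, so your version is in fact more rigorous at precisely the two points the paper glosses over.
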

\begin{proof}
Let $\bar{\Lambda}\in\N$. First, we note that $\bar{\Lambda}+\varepsilon\Delta\bar{\Lambda}\in\M$ for all $\Delta\bar{\Lambda}\in\mathbb{H}_{n}$ such that $\bar{\Lambda}+\Delta\bar{\Lambda}\in\M$, and for all $\varepsilon\in(0,1]$. 
The (right-sided) directional derivative of $\J(\cdot)$ at $\bar{\Lambda}$ in the direction $\Delta\bar{\Lambda}$ is given by
\begin{align}
	\nabla \J(\bar{\Lambda};\bar{\Lambda}+\Delta\bar{\Lambda})&:=\lim_{\varepsilon\to0^{+}} \frac{\J(\bar{\Lambda}+\varepsilon \Delta\bar{\Lambda})-\J(\bar{\Lambda})}{\varepsilon}\notag\\
	&\hspace{0cm}= \lim_{\varepsilon\to0^{+}} \left( \frac{1}{\varepsilon}\tr(\bar{\Lambda} +\varepsilon\Delta\bar{\Lambda})-\frac{1}{\varepsilon} \tr(\bar{\Lambda})-\frac{1}{\varepsilon}\int \Psi_{\theta} \log \frac{G^{*}_{\theta} (\bar{\Lambda} +\varepsilon\Delta\bar{\Lambda})G_{\theta}}{G^{*}_{\theta} \bar{\Lambda} G_{\theta}} \right)\notag\\
	&\hspace{0cm}= -\lim_{\varepsilon\to0^{+}}\frac{1}{\varepsilon}\int \Psi_{\theta} \log \left(1+  \varepsilon\frac{G^{*}_{\theta}\Delta\bar{\Lambda}G_{\theta}}{G^{*}_{\theta} \bar{\Lambda} G_{\theta}}\right) \notag\\
	&\hspace{0cm}=-\int  \Psi_{\theta} \frac{G^{*}_{\theta}\Delta\bar{\Lambda}G_{\theta}}{G^{*}_{\theta}\bar{\Lambda}G_{\theta}},\notag
\end{align}
where we exploited the fact that $\tr(\Delta\bar{\Lambda})=0$, and, in the last step, the Taylor expansion of $\log(1+x)$. Eventually, since (i) $\bar{\Lambda}\in\N$, and (ii) $\Delta\bar{\Lambda}$ is such that $\bar{\Lambda}+\Delta\bar{\Lambda}\in\M$, there exists (at least) a frequency $\bar{\theta}\in[-\pi,\pi)$ such that $G^{*}_{\bar{\theta}}\bar{\Lambda}G_{\bar{\theta}}= 0$ and $G^{*}_{\bar{\theta}}\Delta\bar{\Lambda}G_{\bar{\theta}}\neq 0$. Since $\Psi_{\theta}>0$ for all $\theta\in[-\pi,\pi)$, this in turn implies
$
	\int  \Psi_{\theta} \frac{G^{*}_{\theta}\Delta\bar{\Lambda}G_{\theta}}{G^{*}_{\theta}\bar{\Lambda}G_{\theta}}=\infty,
$
which yields the thesis.
\end{proof}

\begin{Remark}\label{rem:N0}
Lemma \ref{lem:infcomp} provides a characterization of the elements of $\N$ in terms of directional derivatives of $\J(\cdot)$ along directions belonging to the subset of $\mathfrak{S}_{n}$ given by $\M$. Notice that this result typically does not characterize \emph{all} directional derivatives of $\J(\cdot)$ evaluated at elements in $\N$ along directions pointing to $\mathfrak{S}_{n}$. However, for a particular subset of $\N$ this is indeed the case. Let $x\in\C^{n}$, $\|x\|=1$, and let $P_{x}:=xx^{*}$ denote the orthogonal projection onto the subspace spanned by $x$. Consider the following subset of $\N$
\begin{align}
	\N_{0}:=&\{\,P_{\bar{x}}\in\mathfrak{S}_{n}\,:\,  \exists\, \bar{\theta}\in[-\pi,\pi), \, \bar{x}\in\C^{n},\, \|\bar{x}\|=1,  \text{ s.t. } \notag \\
	& \ \  \text{(i) } \langle\bar{x},G_{\bar{\theta}}\rangle=0, \text{ and }\notag \\
	& \ \  \text{(ii) } \langle x,G_{\bar{\theta}}\rangle \ne0,\, \forall\, x\in\C^{n},\, \|x\|=1,\, x\ne \bar{x} \,\}.\label{eq:N0}
\end{align}
By exploiting the same argument of the proof of  Lemma \ref{lem:infcomp}, it follows that the directional derivative of $\J(\cdot)$ evaluated at $\bar{\Lambda}\in\N_{0}$ along \emph{any} direction $\Delta\bar{\Lambda}\in\mathbb{H}_{n}\setminus\{0\}$ such that $\bar\Lambda+\Delta\bar{\Lambda}\in\mathfrak{S}_{n}$ is unbounded below. Moreover, it is worth noticing that, for the particular case $n=2$, it holds $\N_{0}\equiv \N$. \hfill $\diamondsuit$
%
\end{Remark}
}
To conclude this section, we recall the discrete-time version of LaSalle's invariance principle, whose proof can be found in \cite[Prop. 2.6]{LaSalle}.

\begin{Proposition}[Discrete-time LaSalle's invariance principle]\label{thm:LaSalle}
Consider a discrete-time system
\[
x(t+1) = f(x(t)), \quad x(0)\in \mathcal{X}, \ t\geq 0,
\]
where $f\colon\mathcal{X}\to\mathcal{X}$ is continuous and $\mathcal{X}$ is an invariant and compact set.
Suppose $V(\cdot)$ is a continuous function of $x\in \mathcal{X}$, bounded below and
satisfying
\[
\Delta V(x):=V(f(x))-V(x)\leq 0, \quad \forall x\in \mathcal{X},
\]
that is $V(x)$ is non-increasing along (forward) trajectories of the dynamics. Then any trajectory converges to the largest invariant subset $\mathcal{I}$ contained in $\mathcal{E} := \{\,x\in \mathcal{X}\, :\, \Delta V(x) = 0\,\}$.
\end{Proposition}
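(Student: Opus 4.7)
The plan is to invoke the standard $\omega$-limit set machinery in the discrete-time compact setting. The monotonicity hypothesis together with boundedness below implies that the real sequence $\{V(x(t))\}_{t\ge 0}$ is non-increasing and bounded below, hence converges to some $V^{\star}\in\R$. Compactness of $\mathcal{X}$ then guarantees that the trajectory has cluster points, so the $\omega$-limit set
\[
\Omega(x(0)):=\bigcap_{T\ge 0}\,\overline{\{\,x(t)\,:\,t\ge T\,\}}
\]
is a nonempty, closed (hence compact) subset of $\mathcal{X}$, and it attracts the trajectory in the sense that $\mathrm{dist}(x(t),\Omega(x(0)))\to 0$. The remainder of the proof consists in establishing the inclusions $\Omega(x(0))\subseteq \mathcal{I}\subseteq \mathcal{E}$.

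In detail, I would first show that $\Omega(x(0))$ is invariant under $f$. Forward invariance $f(\Omega(x(0)))\subseteq \Omega(x(0))$ is immediate from continuity of $f$: if $x(t_k)\to y$, then $x(t_k+1)=f(x(t_k))\to f(y)\in \Omega(x(0))$. The reverse inclusion uses compactness: given $y=\lim_k x(t_k)\in \Omega(x(0))$, extract a convergent subsequence $x(t_{k_j}-1)\to z\in \Omega(x(0))$; continuity of $f$ yields $f(z)=y$. Next, for any $y\in \Omega(x(0))$, pick $t_k\to\infty$ with $x(t_k)\to y$; continuity of $V$ together with $V(x(t))\to V^{\star}$ gives $V(y)=V^{\star}$, so $V\equiv V^{\star}$ on $\Omega(x(0))$. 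Combining this with invariance, $V(f(y))=V(y)=V^{\star}$ for every $y\in \Omega(x(0))$, i.e., $\Delta V(y)=0$; hence $\Omega(x(0))\subseteq \mathcal{E}$. Being invariant and contained in $\mathcal{E}$, the $\omega$-limit set is contained in the maximal such set $\mathcal{I}$, and the attractiveness of $\Omega(x(0))$ then yields $\mathrm{dist}(x(t),\mathcal{I})\to 0$.

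The main technical obstacle is the invariance of $\Omega(x(0))$ under $f$, and in particular the reverse inclusion $\Omega(x(0))\subseteq f(\Omega(x(0)))$, whose rigorous justification requires both compactness of $\mathcal{X}$ and continuity of $f$. A secondary subtlety is verifying that the trajectory genuinely accumulates only on $\Omega(x(0))$: this is handled by a compactness-based contradiction argument, since any subsequence staying a fixed positive distance from $\Omega(x(0))$ would itself admit a sub-subsequence converging to a point of $\Omega(x(0))$, contradicting its definition.
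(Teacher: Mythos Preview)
The paper does not actually supply a proof of this proposition: it is stated as a recalled result and the reader is referred to \cite[Prop.~2.6]{LaSalle} for the argument. Your proposal is the standard and correct $\omega$-limit set proof of the discrete-time LaSalle principle (nonemptiness and compactness of $\Omega(x(0))$ from compactness of $\mathcal{X}$, invariance of $\Omega(x(0))$ via continuity of $f$, constancy of $V$ on $\Omega(x(0))$ from convergence of $V(x(t))$, and hence $\Omega(x(0))\subseteq\mathcal{E}$), which is precisely the argument one finds in the cited reference. There is nothing to compare here beyond noting that your write-up is self-contained whereas the paper defers the proof.
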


\section{Global convergence analysis}\label{sec:global}

In this section, we present the main results of this note. The first result (Theorem \ref{thm:non-inc}) states that the cost function \eqref{eq:PFcost} is always non-increasing along the trajectories of \eqref{eq:PFalg}. This result provides a positive answer to a conjecture raised in the conclusive part of \cite{FRT11}.

\begin{Theorem} \label{thm:non-inc}
For every $\Lambda\in\mathfrak{S}_{n}$ it holds
\begin{align}
\Delta\J(\Lambda):=\J(\Theta(\Lambda))-\J(\Lambda)\leq 0, \label{eq:ineqinit}
\end{align}
where $\J(\cdot)$ has been defined in \eqref{eq:PFcost}. Moreover $\Delta \J(\Lambda)= 0$ if and only if $\Theta(\Lambda)=\Lambda+\Lambda^{\perp}$, with $\Lambda^{\perp}\in(\mathrm{Range}\,\Gamma)^{\perp}$, where $\Gamma$ is the linear operator defined in Eq.~\eqref{eq:Gamma}.
\end{Theorem}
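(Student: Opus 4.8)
The plan is to show $\J(\Theta(\Lambda))\le\J(\Lambda)$ by writing the difference $\Delta\J(\Lambda)$ as a single integral (or double integral) expression and then bounding it using the two Jensen-type inequalities collected in Lemmas \ref{lem1} and \ref{lem2}. Fix $\Lambda\in\mathfrak{S}_{n}$ and write $L:=\Theta(\Lambda)$. Since $\tr(L)=\tr(\Lambda)=1$, the trace terms in $\J$ cancel, so
\[
\Delta\J(\Lambda)=-\int\Psi_{\theta}\log G^{*}_{\theta}LG_{\theta}+\int\Psi_{\theta}\log G^{*}_{\theta}\Lambda G_{\theta}=\int\Psi_{\theta}\log\frac{G^{*}_{\theta}\Lambda G_{\theta}}{G^{*}_{\theta}LG_{\theta}}.
\]
The task is to prove this quantity is $\le 0$, i.e. that $\int\Psi_{\theta}\log\bigl(G^{*}_{\theta}LG_{\theta}/G^{*}_{\theta}\Lambda G_{\theta}\bigr)\ge 0$.

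The main idea is to introduce, for each fixed $\theta$, the probability density (in a dummy variable $\mu$) obtained by normalizing $\Psi_{\mu}/(G^{*}_{\mu}\Lambda G_{\mu})$ against the matrix $G_{\mu}G^{*}_{\mu}$ weighted appropriately. Concretely, set $\Phi^{\Lambda}_{\mu}:=\Psi_{\mu}/(G^{*}_{\mu}\Lambda G_{\mu})$, so that $\Theta(\Lambda)=\int\Lambda^{1/2}G_{\mu}\Phi^{\Lambda}_{\mu}G^{*}_{\mu}\Lambda^{1/2}$ and hence
\[
G^{*}_{\theta}\Theta(\Lambda)G_{\theta}=\int\Phi^{\Lambda}_{\mu}\,\bigl|G^{*}_{\theta}\Lambda^{1/2}G_{\mu}\bigr|^{2}\,.
\]
Dividing and multiplying inside the integrand by $G^{*}_{\mu}\Lambda G_{\mu}$, one can regard the map $\mu\mapsto \Psi_{\mu}$ (which integrates to $1$) as the weight $w$ in Lemma \ref{lem1} applied to the function $f(\mu):=|G^{*}_{\theta}\Lambda^{1/2}G_{\mu}|^{2}/(G^{*}_{\mu}\Lambda G_{\mu})$. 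Lemma \ref{lem1} then gives, pointwise in $\theta$,
\[
\log\frac{G^{*}_{\theta}LG_{\theta}}{G^{*}_{\theta}\Lambda G_{\theta}}\ \ge\ \int\Psi_{\mu}\log\frac{|G^{*}_{\theta}\Lambda^{1/2}G_{\mu}|^{2}}{(G^{*}_{\mu}\Lambda G_{\mu})(G^{*}_{\theta}\Lambda G_{\theta})}\,,
\]
after absorbing the $\theta$-only factor $G^{*}_{\theta}\Lambda G_{\theta}$ (which is $\int\Psi_{\mu}$-constant in $\mu$) into the logarithm. Multiplying by $\Psi_{\theta}$ and integrating over $\theta$ yields $\Delta\J(\Lambda)\le -\iint\Psi_{\theta}\Psi_{\mu}\log\bigl(|G^{*}_{\theta}\Lambda^{1/2}G_{\mu}|^{2}/((G^{*}_{\mu}\Lambda G_{\mu})(G^{*}_{\theta}\Lambda G_{\theta}))\bigr)$, and the right-hand side is manifestly symmetric in $(\theta,\mu)$; a short argument using concavity of $\log$ together with Lemma \ref{lem2} (with $w(\theta)\sim\Psi_{\theta}$ and $f$ the symmetric kernel $|G^{*}_{\theta}\Lambda^{1/2}G_{\mu}|^{2}$) shows this double integral is $\ge 0$, giving $\Delta\J(\Lambda)\le 0$.

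For the equality characterization, I would trace back through the two inequalities. Equality in the application of Lemma \ref{lem1} forces, for $\Psi$-a.e. $\theta$, the function $\mu\mapsto |G^{*}_{\theta}\Lambda^{1/2}G_{\mu}|^{2}/(G^{*}_{\mu}\Lambda G_{\mu})$ to be constant; equality in the Lemma \ref{lem2} step forces the corresponding degenerate configuration for the symmetric kernel. Unwinding these rigidity conditions, one finds that $G^{*}_{\theta}\Theta(\Lambda)G_{\theta}=G^{*}_{\theta}\Lambda G_{\theta}$ for all $\theta$, i.e. $\Theta(\Lambda)-\Lambda\in(\mathrm{Range}\,\Gamma)^{\perp}$ by the description \eqref{eq:rangeG}; conversely if $\Theta(\Lambda)=\Lambda+\Lambda^{\perp}$ with $\Lambda^{\perp}\in(\mathrm{Range}\,\Gamma)^{\perp}$ then $G^{*}_{\theta}\Theta(\Lambda)G_{\theta}=G^{*}_{\theta}\Lambda G_{\theta}$ and $\Delta\J(\Lambda)=0$ trivially, as already observed before the statement of the lemma. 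The main obstacle I anticipate is the bookkeeping in the two-step reduction — in particular making sure the $\theta$-only and $\mu$-only factors are correctly segregated so that Lemmas \ref{lem1} and \ref{lem2} apply with genuine probability weights, and handling the integrability/finiteness of all integrals involved (which is underwritten by Lemma \ref{lem:cont} away from $\N$, but near $\N$ needs the argument of Lemma \ref{lem:infcomp} or a limiting argument). Extracting the clean equality condition $\Theta(\Lambda)=\Lambda+\Lambda^{\perp}$ from the two separate equality cases, rather than a weaker pointwise statement, is the other delicate point.
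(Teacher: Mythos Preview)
Your reduction to $\int\Psi_\theta\log f_\theta\ge 0$ with $f_\theta=G^*_\theta\Theta(\Lambda)G_\theta/G^*_\theta\Lambda G_\theta$ is correct, and so is the formula $G^*_\theta\Theta(\Lambda)G_\theta=\int\Psi_\mu\,|G^*_\theta\Lambda^{1/2}G_\mu|^2/(G^*_\mu\Lambda G_\mu)$. The gap is the step you describe as ``a short argument using concavity of $\log$ together with Lemma~\ref{lem2}'' to conclude that the symmetric double integral
\[
\iint \Psi_\theta\Psi_\mu\,\log\frac{|G^*_\theta\Lambda^{1/2}G_\mu|^2}{(G^*_\theta\Lambda G_\theta)(G^*_\mu\Lambda G_\mu)}
=\iint \Psi_\theta\Psi_\mu\,\log\langle\Pi_\theta,\Pi_\mu\rangle
\]
is nonnegative, where $\Pi_\theta:=\Lambda^{1/4}G_\theta G^*_\theta\Lambda^{1/4}/(G^*_\theta\Lambda G_\theta)$. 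This need not hold. Lemma~\ref{lem2} compares linear integrals of a nonnegative symmetric kernel, not integrals of its logarithm; concavity of $\log$ only gives the upper bound $\iint\Psi\Psi\log\langle\Pi_\theta,\Pi_\mu\rangle\le\log\iint\Psi\Psi\langle\Pi_\theta,\Pi_\mu\rangle$, which points the wrong way. More concretely, by Cauchy--Schwarz $\langle\Pi_\theta,\Pi_\mu\rangle\le\|\Pi_\theta\|_{\mathrm F}\|\Pi_\mu\|_{\mathrm F}$, and there is no matching lower bound: when the unit vectors $\Lambda^{1/4}G_\theta/\|\Lambda^{1/4}G_\theta\|$ spread out over the sphere, $\langle\Pi_\theta,\Pi_\mu\rangle$ becomes small off the diagonal and the double integral of its logarithm can be strictly negative (e.g.\ for $\Lambda=I/n$ and directions that are far from collinear). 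In short, your first Jensen step, moving the log inside the $\mu$-integral, throws away exactly the information needed to close the argument; the resulting lower bound on $\int\Psi_\theta\log f_\theta$ can be negative, so it does not yield $\Delta\J\le 0$.

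The paper avoids this by applying Jensen in the \emph{outer} variable and to the power $-1/2$: from $\int\Psi_\theta\log f_\theta=-2\int\Psi_\theta\log f_\theta^{-1/2}\ge -2\log\int\Psi_\theta f_\theta^{-1/2}$ it suffices to prove $\int\Psi_\theta f_\theta^{-1/2}\le 1$. This is where Lemma~\ref{lem2} actually enters: starting from the identity $1=\int\Psi_\theta f_\theta^{-1}f_\theta=\iint f_\theta^{-1}\Psi_\theta\Psi_\mu\langle\Pi_\theta,\Pi_\mu\rangle$, Lemma~\ref{lem2} with $w(\theta)=f_\theta^{-1/2}$ and the nonnegative symmetric kernel $\Psi_\theta\Psi_\mu\langle\Pi_\theta,\Pi_\mu\rangle$ gives $1\ge\iint f_\theta^{-1/2}f_\mu^{-1/2}\Psi_\theta\Psi_\mu\langle\Pi_\theta,\Pi_\mu\rangle=\big\|\int\Psi_\theta f_\theta^{-1/2}\Pi_\theta\big\|_{\mathrm F}^2$, and a final Cauchy--Schwarz against $\Lambda^{1/2}$ (using $\langle\Lambda^{1/2},\Pi_\theta\rangle=1$ and $\|\Lambda^{1/2}\|_{\mathrm F}=1$) yields $\big(\int\Psi_\theta f_\theta^{-1/2}\big)^2\le 1$. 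The exponent $-1/2$ is essential here; with your inner-Jensen route there is no analogous identity to start from. Once the inequality is established this way, the equality case is immediate: equality in the outer Jensen forces $f_\theta$ constant, and then $\Delta\J=0$ fixes that constant to $1$, i.e.\ $G^*_\theta\Theta(\Lambda)G_\theta=G^*_\theta\Lambda G_\theta$ for all $\theta$, which is precisely $\Theta(\Lambda)-\Lambda\in(\mathrm{Range}\,\Gamma)^\perp$.
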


\begin{proof}
By plugging the expression of $\Theta(\cdot)$ into \eqref{eq:ineqinit}, we get
\begin{align}
&\Delta\J(\Lambda)=\J(\Theta(\Lambda))-\J(\Lambda)\leq 0 \notag \\
\Leftrightarrow\ &\int \Psi_{\theta} \log G_{\theta}^{*} \Theta(\Lambda)G_{\theta}-\int \Psi_{\theta} \log G_{\theta}^{*} \Lambda G_{\theta}\ge 0\notag\\
\Leftrightarrow\ &\int \Psi_{\theta} \log f_{\theta} \ge 0, \label{eq:equivalthm2}
\end{align}
where $f_{\theta}:=\frac{G^{*}_{\theta}\Theta(\Lambda)G_{\theta}}{G^{*}_{\theta}\Lambda G_{\theta}}$ is well-defined and strictly positive on $\T$, since $\Theta(\Lambda)$ has the same rank and kernel of $\Lambda\in\mathfrak{S}_{n}$, cf. \cite[Prop.~2.1]{FRT11}.

Firstly, we notice that the following inequality holds
\begin{align}\label{eq:ineq11}
  \int \Psi_{\theta} \log f_{\theta} = -2 \int \Psi_{\theta} \log f_{\theta}^{-1/2}\ge -2\log \int \Psi_{\theta} f_{\theta}^{-1/2},
\end{align}
which is a consequence of Lemma \ref{lem1}.

Secondly, by defining $\Pi_\theta:=\frac{\Lambda^{1/4}G_\theta G_\theta^*\Lambda^{1/4}}{G_\theta^*\Lambda G_\theta}$, we have the following chain of equations
\begin{align}
  1 & = \int \Psi_\theta f_\theta^{-1}f_\theta = \int \int f_\theta^{-1} \Psi_\theta \Psi_\omega \langle \Pi_\theta, \Pi_\omega\rangle \label{eq:ineq21}\\ 
  & \ge \int \int f_\theta^{-1/2}f_\omega^{-1/2}\Psi_\theta \Psi_\omega \langle \Pi_\theta, \Pi_\omega\rangle  \label{eq:ineq22}\\\ 
  & = \left\langle \int \Psi_\theta f_\theta^{-1/2}\Pi_\theta, \int\Psi_\omega f_\omega^{-1/2}\Pi_\omega \right\rangle  \notag\\\
  & = \|\Lambda^{1/2}\|_{\mathrm{F}}^2\left\|\int \Psi_\theta f_\theta^{-1/2}\Pi_\theta\right\|^2_{\mathrm{F}} \label{eq:ineq24}\\\
  & \ge \left|\left\langle\Lambda^{1/2},\int\Psi_\theta f_\theta^{-1/2}\Pi_\theta \right\rangle \right|^{2} \label{eq:ineq25}\\\
  & = \left(\int\Psi_\theta f_\theta^{-1/2}\right)^2 \label{eq:ineq26}
\end{align}
where
\begin{itemize}
    \item Eq.~\eqref{eq:ineq21} follows by noticing that $f_\theta=\int \Psi_\omega \langle \Pi_\theta, \Pi_\omega\rangle$,
    \item Eq.~\eqref{eq:ineq22} uses Lemma \ref{lem2} applied to the symmetric function $\Psi_\theta \Psi_\omega \langle \Pi_\theta, \Pi_\omega\rangle$,
    \item Eq.~\eqref{eq:ineq24} exploits the fact that $\|\Lambda^{1/2}\|_{\mathrm{F}}^2=\tr(\Lambda)=1$,
    \item Eq.~\eqref{eq:ineq25} follows from Cauchy--Schwarz inequality, and
    \item Eq.~\eqref{eq:ineq26} uses the fact that $\langle\Lambda^{1/2},\Pi_\theta\rangle=1$.
\end{itemize}
Eventually, a combination of the two sets of inequalities \eqref{eq:ineq11} and \eqref{eq:ineq21}-\eqref{eq:ineq26} yields $\int \Psi_{\theta} \log f_{\theta} \ge 0$ which, in turn, implies $ \Delta \J(\Lambda)\le 0$, in view of equivalence \eqref{eq:equivalthm2}.

Now it remains to prove that we attain equality in \eqref{eq:ineqinit} if only if $\Lambda\in\mathfrak{S}_{n}$ is such that $\Theta(\Lambda)=\Lambda+\Lambda^{\perp}$ with $\Lambda^{\perp}\in(\mathrm{Range}\,\Gamma)^{\perp}$. 
In view of the definition of $(\mathrm{Range}\,\Gamma)^{\perp}$ given in Eq.~\eqref{eq:rangeG}, the ``if'' part becomes straightforward. Indeed, if $\Theta(\Lambda)=\Lambda+\Lambda^{\perp}$, we have
\begin{align*}
	\Delta\J(\Lambda) &=  \int \Psi_{\theta} \log   \frac{G_{\theta}^{*} \Theta(\Lambda)G_{\theta}}{G_{\theta}^{*} \Lambda G_{\theta}} =  \int \Psi_{\theta} \log   \frac{G_{\theta}^{*} (\Lambda+\Lambda^{\perp})G_{\theta}}{G_{\theta}^{*} \Lambda G_{\theta}}\\ &=  \int \Psi_{\theta} \log   \frac{G_{\theta}^{*} \Lambda G_{\theta}}{G_{\theta}^{*} \Lambda G_{\theta}}=0.
\end{align*}
So it remains to prove the ``only if'' part, i.e., if equality in \eqref{eq:ineqinit} is attained for $\Lambda$ then $\Theta(\Lambda)=\Lambda+\Lambda^{\perp}$ with $\Lambda^{\perp}\in(\mathrm{Range}\,\Gamma)^{\perp}$. To this end, we notice that a necessary condition for \eqref{eq:ineqinit} to hold with equality is to have \eqref{eq:ineq11} satisfied with equality. By Lemma \ref{lem1}, this implies that the function $f_{\theta}$ is constant for every $\theta\in[-\pi,\pi)$, namely
\begin{align*}
	f_{\theta}&=\frac{G_{\theta}^{*} \Theta(\Lambda)G_{\theta}}{G_{\theta}^{*} \Lambda G_{\theta}}=\kappa,\quad \forall\, \theta\in[-\pi,\pi), 
\end{align*}
where $\kappa>0$ is a real constant. Now equality in \eqref{eq:ineqinit} is attained (if and) only if $\kappa=1$, and therefore we have that
\[
	G_{\theta}^{*} \Theta(\Lambda)G_{\theta} = G_{\theta}^{*} \Lambda G_{\theta},\quad \forall\, \theta\in[-\pi,\pi).
\]
From the latter equation and by definition of $(\mathrm{Range}\,\Gamma)^{\perp}$ in \eqref{eq:rangeG}, it follows that $\Theta(\Lambda) =\Lambda +\Lambda^{\perp}$, $\Lambda^{\perp}\in(\mathrm{Range}\,\Gamma)^{\perp}$. {
This completes the proof.}
\end{proof}

The following theorem is based on the previous one and states that iteration \eqref{eq:PFalg} always converges to the set of fixed points of $\Theta(\cdot)$.

\begin{Theorem} \label{thm:convergence}
  The trajectories generated by iteration \eqref{eq:PFalg} converge for all $\Lambda_{0}\in\mathfrak{S}_n$ to elements belonging to $\mathcal{F}:=\{\Lambda\in\mathfrak{S}_n\,:\, \Theta(\Lambda)=\Lambda\}$.
\end{Theorem}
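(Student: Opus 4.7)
The plan is to invoke LaSalle's discrete-time invariance principle (Proposition \ref{thm:LaSalle}) on $\mathfrak{S}_{n}$ with Lyapunov function $V=\J$. First I would verify the hypotheses: $\mathfrak{S}_{n}$ is compact, being a closed bounded subset of $\mathbb{H}_{n}$; the iteration map $\Theta$ is continuous on $\mathcal{M}$ and sends $\mathcal{M}$ into $\mathcal{M}$ by \cite[Thm.~4.1]{PF06}; $\J$ is continuous and bounded below on $\mathfrak{S}_{n}$ by Lemma \ref{lem:cont}; and $\Delta\J(\Lambda)\leq 0$ for every $\Lambda\in\mathfrak{S}_{n}$ by Theorem \ref{thm:non-inc}. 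LaSalle would then guarantee that every trajectory converges to the largest forward-invariant subset $\mathcal{I}$ contained in $\mathcal{E}:=\{\,\Lambda\in\mathfrak{S}_{n}\,:\,\Delta\J(\Lambda)=0\,\}$.

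The crux of the argument is to identify $\mathcal{I}$ with $\mathcal{F}$. The inclusion $\mathcal{F}\subseteq\mathcal{I}$ is immediate. For the converse, pick $\Lambda\in\mathcal{I}$; by the equality characterization in Theorem \ref{thm:non-inc}, $\Theta(\Lambda)=\Lambda+\Lambda^{\perp}$ for some $\Lambda^{\perp}\in(\mathrm{Range}\,\Gamma)^{\perp}$, and the task becomes to prove $\Lambda^{\perp}=0$. In the regular case $\Lambda>0$, I would multiply both sides of this identity by $\Lambda^{-1/2}$ to obtain
\[
\int G_{\theta}\frac{\Psi_{\theta}}{G^{*}_{\theta}\Lambda G_{\theta}}G^{*}_{\theta} \;=\; I + \Lambda^{-1/2}\Lambda^{\perp}\Lambda^{-1/2}.
\]
The left-hand side lies in $\mathrm{Range}\,\Gamma$ by definition, and so does $I$ by the feasibility condition \eqref{eq:feasab} with $\Sigma=I$; hence $Y:=\Lambda^{-1/2}\Lambda^{\perp}\Lambda^{-1/2}\in\mathrm{Range}\,\Gamma$. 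The orthogonality $\langle\Lambda^{\perp},Y\rangle=0$ combined with the rearrangement
\[
\langle\Lambda^{\perp},Y\rangle \;=\; \tr\bigl((\Lambda^{-1/4}\Lambda^{\perp}\Lambda^{-1/4})^{2}\bigr) \;=\; \|\Lambda^{-1/4}\Lambda^{\perp}\Lambda^{-1/4}\|_{\mathrm{F}}^{2}
\]
then forces $\Lambda^{\perp}=0$, whence $\Lambda\in\mathcal{F}$. For singular $\Lambda\in\mathcal{M}\setminus\mathcal{M}_{+}$ the same template should go through using the Moore--Penrose pseudo-inverse of $\Lambda$ together with the fact that $\Theta$ preserves $\ker\Lambda$ (so that $\Lambda^{\perp}$ automatically vanishes on $\ker\Lambda$), cf.~\cite[Prop.~2.1]{FRT11}, which reduces the computation to the range of $\Lambda$ where $\Lambda$ acts as a positive definite operator.

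The principal technical obstacle I foresee is the boundary set $\mathcal{N}$, where $\Theta$ is not defined in the usual sense (the integrand in \eqref{eq:PFalg} takes the indeterminate form $0/0$ at the frequencies where $G^{*}_{\theta}\Lambda G_{\theta}=0$), so that LaSalle's continuity hypothesis is not automatic on all of $\mathfrak{S}_{n}$. My plan to overcome this is to exclude $\mathcal{N}$ as a possible location of $\omega$-limit points by exploiting Lemma \ref{lem:infcomp}: the $-\infty$ right-sided directional derivative of $\J$ at any $\bar{\Lambda}\in\mathcal{N}$ along every direction pointing into $\mathcal{M}$, together with the convergence and monotone non-increase of the sequence $\{\J(\Lambda_{k})\}_{k\geq0}$ on a trajectory $\{\Lambda_{k}\}\subset\mathcal{M}$, should force $\omega$-limits to remain in a compact subset of $\mathcal{M}$, where $\Theta$ is continuous and the LaSalle-plus-rigidity argument above delivers the conclusion.
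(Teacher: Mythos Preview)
Your LaSalle setup and the reduction to showing $\mathcal{I}=\mathcal{F}$ coincide with the paper's. The difference lies in the rigidity step. You fix a single $\Lambda\in\mathcal{I}$, write $\Theta(\Lambda)=\Lambda+\Lambda^{\perp}$, and argue that $\Lambda^{-1/2}\Lambda^{\perp}\Lambda^{-1/2}\in\mathrm{Range}\,\Gamma$ by subtracting $I$, which forces $\Lambda^{\perp}=0$ via orthogonality. This is correct but relies on two extra ingredients: the feasibility hypothesis $I\in\mathrm{Range}\,\Gamma$ (equivalently \eqref{eq:feasab} with $\Sigma=I$), and a case split regular/singular handled through the pseudo-inverse. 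The paper instead exploits the \emph{invariance} of $\mathcal{I}$ more fully: since $G_{\theta}^{*}\Lambda_{k}G_{\theta}=G_{\theta}^{*}\Lambda_{0}G_{\theta}$ for every $k$ along a trajectory in $\mathcal{E}$, the iteration becomes $\Lambda_{k+1}=\Lambda_{k}^{1/2}M\Lambda_{k}^{1/2}$ with a \emph{fixed} $M$; then the trace identity $\tr(\Lambda_{0})-2\tr(\Lambda_{1})+\tr(\Lambda_{2})=\|\Lambda_{0}^{1/4}(I-M)\Lambda_{0}^{1/4}\|_{\mathrm{F}}^{2}=0$ yields $\Lambda_{0}=\Lambda_{0}^{1/2}M\Lambda_{0}^{1/2}=\Theta(\Lambda_{0})$. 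This avoids feasibility altogether and treats all ranks uniformly, so the paper's argument is strictly more general and somewhat cleaner; yours is a valid alternative under the standing feasibility assumption of the problem.

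On the boundary set $\mathcal{N}$: the paper applies LaSalle on the whole of $\mathfrak{S}_{n}$ and does not single out $\mathcal{N}$, tacitly treating $\Theta$ as continuous there. Your plan to exclude $\mathcal{N}$ via Lemma~\ref{lem:infcomp} is a reasonable alternative, but note that the $-\infty$ one-sided derivative alone is not quite enough: it controls $\J(\bar{\Lambda}+\varepsilon\Delta)$ only for small $\varepsilon$ along a fixed direction, whereas a converging subsequence $\Lambda_{k}\to\bar{\Lambda}$ may approach along varying directions. You would need to combine the derivative computation with convexity of $\J$ on $\mathcal{L}$ (so that $\J(\Lambda_{k})<\J(\bar{\Lambda})$ for $\Lambda_{k}\in\mathcal{M}$ close to $\bar{\Lambda}$), after which the monotone decrease of $\J(\Lambda_{k})$ and continuity of $\J$ (Lemma~\ref{lem:cont}) do give the desired contradiction.
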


\begin{proof}
  The proof consists of an application of the discrete-time version of LaSalle's invariance principle (Proposition \ref{thm:LaSalle}). The natural candidate Lyapunov function $V(\cdot)$ of Proposition \ref{thm:LaSalle} is given in this case by $\J(\Lambda)$ which is continuous and bounded for every $\Lambda\in\mathfrak{S}_n$ (Lemma \ref{lem:cont}), and, by virtue of Theorem \ref{thm:non-inc}, non-increasing along the (forward) trajectories of the dynamics \eqref{eq:PFalg}. Hence, by LaSalle's invariance principle, we have that the (forward) trajectories generated by iteration \eqref{eq:PFalg} converges to the largest invariant set $\mathcal{I}$ contained in 
  \[
    \mathcal{E}:=\{\,\Lambda\in \mathfrak{S}_n\,:\, \Delta\J(\Lambda)=0\,\}.
  \]

{\check
}

Therefore, it remains to show that the trajectories in $\mathcal{I}$ consist of fixed points of $\Theta(\cdot)$ only, that is, $\mathcal{I}\equiv\mathcal{F}$.
To this end, by Theorem \ref{thm:non-inc}, we know that the elements $\Lambda\in\mathcal{E}$ satisfy the condition
\begin{equation}\label{eq:constraint}
	\Theta(\Lambda)= \Lambda+\Lambda^{\perp},
\end{equation}
with $\Lambda^{\perp}\in(\mathrm{Range}\,\Gamma)^{\perp}$. In view of the latter constraint on the dynamics \eqref{eq:PFalg} and the definition of $(\mathrm{Range}\,\Gamma)^{\perp}$ in \eqref{eq:rangeG}, it follows that any trajectory belonging to $\mathcal{E}$ must obey to the recurrence relation
\begin{equation}\label{eq:iter2}
	\Lambda_{k+1}=\Lambda_{k}^{1/2} M \Lambda_{k}^{1/2}, \quad \Lambda_{0}\in\mathcal{E},\ k\geq 0,
\end{equation}
where
\[
 	M:=\int_{-\pi}^{\pi} \Psi_\theta\frac{G_\theta G^{*}_\theta}{G^{*}_\theta\Lambda_{0} G_\theta},
\]
depends on the initial condition $\Lambda_{0}$ only, in view of Eq.~\eqref{eq:constraint}. Now, since \eqref{eq:iter2} must generate unit trace trajectories starting from any $\Lambda_{0}\in\mathfrak{S}_n$, we have $\mathrm{tr}(\Lambda_{0})=\mathrm{tr}(\Lambda_{1})=\mathrm{tr}(\Lambda_{2})=1$. By exploiting the cyclic property and the linearity of the trace, this in turn implies that
\begin{align*}
&\mathrm{tr}(\Lambda_{0}) - 2\mathrm{tr}(\Lambda_{1}) + \mathrm{tr}(\Lambda_{2}) = \mathrm{tr}(\Lambda_{0}) - 2\mathrm{tr}(\Lambda_{0}^{1/2}M\Lambda_{0}^{1/2}) + \mathrm{tr}(M\Lambda_{0}^{1/2}M\Lambda_{0}^{1/2})\\
&=  \mathrm{tr}(\Lambda_{0}) - \mathrm{tr}(\Lambda_{0}^{3/4}M\Lambda_{0}^{1/4}) - \mathrm{tr}(\Lambda_{0}^{1/4}M\Lambda_{0}^{3/4}) +\mathrm{tr}(\Lambda_{0}^{1/4}M\Lambda_{0}^{1/2}M\Lambda_{0}^{1/4})\\
&=  \mathrm{tr}\left(\Lambda_{0}^{1/4}(I-M)\Lambda_{0}^{1/4}\right)^{2}= \left\|\Lambda_{0}^{1/4}(I-M)\Lambda_{0}^{1/4}\right\|_{\mathrm{F}}^{2} =0.
\end{align*} 
The previous equation is satisfied if and only if $\Lambda_{0}^{1/4}(I-M)\Lambda_{0}^{1/4}=0$, or, equivalently, if and only if
\[
	\Lambda_{0}=\Lambda_{0}^{1/2}M\Lambda_{0}^{1/2}.
\]
From the previous equation and Eq.~\eqref{eq:iter2} it readily follows that $\Lambda_{0}$ must be a fixed point of $\Theta(\cdot)$. This ends the proof.
\end{proof}

As a final result, we characterize a whole family of fixed points of $\Theta(\cdot)$ that are \emph{not} asymptotically stable. The following result provides a partial answer to another conjecture of \cite[Sec.~V]{FRT11} claiming that orthogonal rank-one projections which do not belong to the closure of the set of positive definite fixed points $\mathcal{P}$ are unstable equilibrium points of $\Theta(\cdot)$.
\begin{Proposition}
The set $\N_{0}$ defined in Eq.~\eqref{eq:N0} consists of fixed points that are not asymptotically stable for the dynamics \eqref{eq:PFalg}.
\end{Proposition}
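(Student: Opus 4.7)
The plan is to split the claim into two parts: first verify that every element of $\N_0$ is a fixed point of $\Theta(\cdot)$, then exploit the unboundedness-below of the directional derivatives of $\J(\cdot)$ at such points (Remark \ref{rem:N0}) to contradict asymptotic stability via the Lyapunov property established in Theorem \ref{thm:non-inc}.

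For the fixed-point part, take $\bar{\Lambda}=P_{\bar{x}}\in\N_{0}$ and observe that, since $\bar{\Lambda}$ is a unit-norm rank-one orthogonal projection, $\bar{\Lambda}^{1/2}=\bar{\Lambda}$. Setting $\alpha_{\theta}:=G^{*}_{\theta}\bar{x}$, a direct computation gives $G^{*}_{\theta}\bar{\Lambda}G_{\theta}=|\alpha_{\theta}|^{2}$ and $\bar{\Lambda}^{1/2}G_{\theta}G^{*}_{\theta}\bar{\Lambda}^{1/2}=|\alpha_{\theta}|^{2}P_{\bar{x}}$, so the integrand in \eqref{eq:PFalg} reduces to $\Psi_{\theta}P_{\bar{x}}$ wherever $\alpha_{\theta}\ne 0$. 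Because $\alpha_{\theta}$ is the restriction to $\T$ of a rational function analytic on an annulus containing $\T$ and cannot vanish identically (otherwise $\bar{x}^{*}G$ would vanish everywhere, contradicting reachability of $(A,B)$), its zero set on $\T$ has Lebesgue measure zero. Hence $\Theta(\bar{\Lambda})=P_{\bar{x}}\int\Psi_{\theta}=P_{\bar{x}}=\bar{\Lambda}$.

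For the non-asymptotic-stability part, I would argue by contradiction. Suppose $\bar{\Lambda}\in\N_{0}$ were asymptotically stable: there would exist a relative neighborhood $U\subset\mathfrak{S}_{n}$ of $\bar{\Lambda}$ from which every trajectory of \eqref{eq:PFalg} converges to $\bar{\Lambda}$. By Remark \ref{rem:N0}, for every non-zero $\Delta\bar{\Lambda}\in\mathbb{H}_{n}$ with $\bar{\Lambda}+\Delta\bar{\Lambda}\in\mathfrak{S}_{n}$, the right-sided directional derivative of $\J(\cdot)$ at $\bar{\Lambda}$ along $\Delta\bar{\Lambda}$ equals $-\infty$. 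Fixing any such $\Delta\bar{\Lambda}$ and a sufficiently small $\varepsilon>0$, the point $\Lambda_{0}:=\bar{\Lambda}+\varepsilon\Delta\bar{\Lambda}$ lies in $U$ and satisfies $\J(\Lambda_{0})<\J(\bar{\Lambda})$. Theorem \ref{thm:non-inc} then forces $\J(\Lambda_{k})\le\J(\Lambda_{0})<\J(\bar{\Lambda})$ for every $k\ge 0$ along the trajectory starting at $\Lambda_{0}$, while the hypothesis $\Lambda_{k}\to\bar{\Lambda}$ together with continuity of $\J(\cdot)$ on $\mathfrak{S}_{n}$ (Lemma \ref{lem:cont}) yields $\J(\bar{\Lambda})=\lim_{k}\J(\Lambda_{k})\le\J(\Lambda_{0})<\J(\bar{\Lambda})$, an absurdity.

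The main obstacle is really confined to the fixed-point verification: at the singular frequency $\bar{\theta}$ both numerator and denominator of the integrand in \eqref{eq:PFalg} vanish, so one has to argue that the rank-one structure of $\bar{\Lambda}$ makes this indeterminate form removable on a co-null set of $\T$. Once this is in place, the instability argument proceeds almost mechanically from Remark \ref{rem:N0}, Theorem \ref{thm:non-inc}, and Lemma \ref{lem:cont}, without any need to analyze the linearization of $\Theta(\cdot)$ at the boundary fixed points.
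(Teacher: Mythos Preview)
Your proof is correct and follows essentially the same route as the paper: the fixed-point property of rank-one projections (which the paper obtains by citing \cite[Prop.~4.3]{PF06} rather than the direct computation you give), the $-\infty$ directional derivatives from Remark~\ref{rem:N0}, and the monotonicity of $\J(\cdot)$ from Theorem~\ref{thm:non-inc} combined with continuity from Lemma~\ref{lem:cont}. Your explicit contradiction argument is in fact slightly cleaner than the paper's, which asserts a full neighborhood where $\J(\bar{\Lambda})>\J(\Lambda)$ without spelling out the passage from pointwise directional derivatives to that uniform statement.
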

\begin{proof}
Let $\bar{\Lambda} \in\N_{0}$. Notice that $\bar{\Lambda}$ is a rank-one orthogonal projection so that $\bar{\Lambda}$ is a fixed point of $\Theta(\cdot)$ by \cite[Prop.~4.3]{PF06}. Now observe that, in view of Lemma \ref{lem:infcomp} and Remark \ref{rem:N0}, all the (right-sided) directional derivatives at $\J(\bar{\Lambda})$ along directions pointing to $\mathfrak{S}_{n}$ take the value $-\infty$. This implies that in a sufficiently small neighbourhood $U$ of $\bar{\Lambda}$, it holds $\J(\bar{\Lambda})>\J(\Lambda)$, $\forall\,\Lambda\in U\cap \mathfrak{S}_{n}$, $\Lambda\ne \bar{\Lambda}$. In light of this, the claim follows from the fact that $\J(\cdot)$ is non-increasing along trajectories of the dynamics \eqref{eq:PFalg} (Theorem \ref{thm:non-inc}).
\end{proof}

{\check 

\section{Concluding remarks}\label{sec:conclusion}

In this paper, we analyzed the global convergence properties of an extensively studied fixed-point algorithm for the Kullback--Leibler approximation of spectral densities introduced by Pavon and Ferrante in \cite{PF06}. Our main result states that the Pavon--Ferrante algorithm globally converges to one of its fixed points. 

A question which remains unanswered in the paper concerns global convergence of the Pavon--Ferrante algorithm to a fixed point leading to the solution of the Georgiou--Lindquist spectral approximation problem, and, in particular, to the set $\mathcal{P}$ of \emph{positive definite} fixed points. A possible approach to guarantee convergence to a positive definite fixed point is to modify the Pavon--Ferrante iteration by adding at each step a suitable ``correction'' term belonging to $(\mathrm{Range}\, \Gamma)^{\perp}$ which prevents the iteration to approach the boundary of $\mathfrak{S}_{n}$. Notice in particular that, in view of Theorem~\ref{thm:non-inc}, the presence of such terms does not affect the decreasing behavior of $\J(\cdot)$ along the trajectories of the iteration, and, consequently, the convergence argument used in the proof of Theorem~\ref{thm:convergence}. This aspect will be the subject of future investigation.

}

\section{Acknowledgements}
The author wishes to thank Prof.~R. Sepulchre for having brought to his attention the problem addressed in the paper, and Prof.~A. Ferrante for several enlightening discussions.

\bibliographystyle{IEEEtran}
\bibliography{Bibliography}

\begin{thebibliography}{10}
\providecommand{\url}[1]{#1}
\csname url@samestyle\endcsname
\providecommand{\newblock}{\relax}
\providecommand{\bibinfo}[2]{#2}
\providecommand{\BIBentrySTDinterwordspacing}{\spaceskip=0pt\relax}
\providecommand{\BIBentryALTinterwordstretchfactor}{4}
\providecommand{\BIBentryALTinterwordspacing}{\spaceskip=\fontdimen2\font plus
\BIBentryALTinterwordstretchfactor\fontdimen3\font minus
  \fontdimen4\font\relax}
\providecommand{\BIBforeignlanguage}[2]{{%
\expandafter\ifx\csname l@#1\endcsname\relax
\typeout{** WARNING: IEEEtran.bst: No hyphenation pattern has been}%
\typeout{** loaded for the language `#1'. Using the pattern for}%
\typeout{** the default language instead.}%
\else
\language=\csname l@#1\endcsname
\fi
#2}}
\providecommand{\BIBdecl}{\relax}
\BIBdecl

\bibitem{PF06}
M.~Pavon and A.~Ferrante, ``On the {G}eorgiou--{L}indquist approach to
  constrained {K}ullback--{L}eibler approximation of spectral densities,''
  \emph{{IEEE} Trans. Autom. Control}, vol.~51, no.~4, pp. 639--644, 2006.

\bibitem{GL03}
T.~T. Georgiou and A.~Lindquist, ``{K}ullback--{L}eibler approximation of
  spectral density functions,'' \emph{{IEEE} Trans. Inf. Theory}, vol.~49,
  no.~11, pp. 2910--2917, 2003.

\bibitem{A65moments}
N.~I. Akhiezer, \emph{The classical moment problem and some related questions
  in analysis}.\hskip 1em plus 0.5em minus 0.4em\relax Edinburgh: Oliver \&
  Boyd, 1965.

\bibitem{BL08}
C.~I. Byrnes and A.~Lindquist, ``Important moments in systems and control,''
  \emph{{SIAM} J. Control Optim.}, vol.~47, no.~5, pp. 2458--2469, 2008.

\bibitem{LP15}
A.~Lindquist and G.~Picci, \emph{Linear Stochastic Systems}, ser. Series in
  Contemporary Mathematics.\hskip 1em plus 0.5em minus 0.4em\relax
  Springer-Verlag Berlin Heidelberg, 2015.

\bibitem{G87}
T.~T. Georgiou, ``Realization of power spectra from partial covariance
  sequences,'' \emph{{IEEE} Trans. Acoust., Speech, Signal Process}, vol.~35,
  no.~4, pp. 438--449, 1987.

\bibitem{BLGM95}
C.~I. Byrnes, A.~Lindquist, S.~V. Gusev, and A.~S. Matveev, ``A complete
  parameterization of all positive rational extensions of a covariance
  sequence,'' \emph{{IEEE} Trans. Autom. Control}, vol.~40, no.~11, pp.
  1841--1857, 1995.

\bibitem{BGL98}
C.~I. Byrnes, S.~V. Gusev, and A.~Lindquist, ``A convex optimization approach
  to the rational covariance extension problem,'' \emph{{SIAM} J. Control
  Optim.}, vol.~37, no.~1, pp. 211--229, 1998.

\bibitem{BGL00}
C.~I. Byrnes, T.~T. Georgiou, and A.~Lindquist, ``A new approach to spectral
  estimation: A tunable high-resolution spectral estimator,'' \emph{{IEEE}
  Trans. Signal Process.}, vol.~48, no.~11, pp. 3189--3205, 2000.

\bibitem{G01}
T.~T. Georgiou, ``Spectral estimation via selective harmonic amplification,''
  \emph{{IEEE} Trans. Autom. Control}, vol.~46, no.~1, pp. 29--42, 2001.

\bibitem{RFP09}
F.~Ramponi, A.~Ferrante, and M.~Pavon, ``A globally convergent matricial
  algorithm for multivariate spectral estimation,'' \emph{{IEEE} Trans. Autom.
  Control}, vol.~54, no.~10, pp. 2376--2388, 2009.

\bibitem{RFP10}
------, ``On the well-posedness of multivariate spectrum approximation and
  convergence of high-resolution spectral estimators,'' \emph{Systems \&
  Control Lett.}, vol.~59, no.~3, pp. 167--172, 2010.

\bibitem{BTL01}
C.~I. Byrnes, T.~T. Georgiou, and A.~Lindquist, ``A generalized entropy
  criterion for {N}evanlinna--{P}ick interpolation with degree constraint,''
  \emph{{IEEE} Trans. Autom. Control}, vol.~46, no.~6, pp. 822--839, 2001.

\bibitem{BLN03}
A.~Blomqvist, A.~Lindquist, and R.~Nagamune, ``Matrix-valued
  {N}evanlinna--{P}ick interpolation with complexity constraint: An
  optimization approach,'' \emph{{IEEE} Trans. Autom. Control}, vol.~48,
  no.~12, pp. 2172--2190, 2003.

\bibitem{BTLM06}
C.~I. Byrnes, T.~T. Georgiou, A.~Lindquist, and A.~Megretski, ``Generalized
  interpolation in {$H^{\infty}$} with a complexity constraint,'' \emph{Trans.
  Am. Math. Soc.}, vol. 358, no.~3, pp. 965--987, 2006.

\bibitem{dym1998basic}
H.~Dym, ``A basic interpolation problem,'' \emph{Holomorphic spaces}, vol.~33,
  pp. 381--423, 1998.

\bibitem{bolotnikov2006boundary}
V.~Bolotnikov and H.~Dym, \emph{On boundary interpolation for matrix valued
  Schur functions}.\hskip 1em plus 0.5em minus 0.4em\relax American
  Mathematical Soc., 2006, vol. 181, no. 856.

\bibitem{FPR08}
A.~Ferrante, M.~Pavon, and F.~Ramponi, ``Hellinger versus {K}ullback--{L}eibler
  multivariable spectrum approximation,'' \emph{{IEEE} Trans. Autom. Control},
  vol.~53, no.~4, pp. 954--967, 2008.

\bibitem{GL08}
T.~T. Georgiou and A.~Lindquist, ``A convex optimization approach to {ARMA}
  modeling,'' \emph{{IEEE} Trans. Autom. Control}, vol.~53, no.~5, pp.
  1108--1119, 2008.

\bibitem{CFPP11}
F.~P. Carli, A.~Ferrante, M.~Pavon, and G.~Picci, ``A maximum entropy solution
  of the covariance extension problem for reciprocal processes,'' \emph{{IEEE}
  Trans. Autom. Control}, vol.~56, no.~9, pp. 1999--2012, 2011.

\bibitem{FMP12}
A.~Ferrante, C.~Masiero, and M.~Pavon, ``Time and spectral domain relative
  entropy: A new approach to multivariate spectral estimation,'' \emph{{IEEE}
  Trans. Autom. Control}, vol.~57, no.~10, pp. 2561--2575, 2012.

\bibitem{ZF12}
M.~Zorzi and A.~Ferrante, ``On the estimation of structured covariance
  matrices,'' \emph{Automatica}, vol.~48, no.~9, pp. 2145--2151, 2012.

\bibitem{FPZ12}
A.~Ferrante, M.~Pavon, and M.~Zorzi, ``A maximum entropy enhancement for a
  family of high-resolution spectral estimators,'' \emph{{IEEE} Trans. Autom.
  Control}, vol.~57, no.~2, pp. 318--329, 2012.

\bibitem{Z14}
M.~Zorzi, ``A new family of high-resolution multivariate spectral estimators,''
  \emph{{IEEE} Trans. Autom. Control}, vol.~59, no.~4, pp. 892--904, 2014.

\bibitem{Z15}
------, ``Multivariate spectral estimation based on the concept of optimal
  prediction,'' \emph{{IEEE} Trans. Autom. Control}, vol.~60, no.~6, pp.
  1647--1652, 2015.

\bibitem{GL16}
T.~T. Georgiou and A.~Lindquist, ``Likelihood analysis of power spectra and
  generalized moment problems,'' \emph{{IEEE} Trans. Autom. Control (To
  appear)}, 2017.

\bibitem{stefanovski2015}
J.~D. Stefanovski, ``{$\mathscr{H}_{\infty}$} problem with nonstrict inequality
  and all solutions: Interpolation approach,'' \emph{{SIAM} J. Control Optim.},
  vol.~53, no.~4, pp. 1734--1767, 2015.

\bibitem{stefanovski2015reformulation}
------, ``A reformulation of augmented basic interpolation problem and an
  application to {$\mathscr{H}_{\infty}$} control,'' \emph{Linear Algebra
  Appl.}, vol. 485, pp. 103--123, 2015.

\bibitem{stefanovski2016new}
------, ``New interpolation solution and application in system modeling and
  optimal control with prescribed distance to instability,'' \emph{Int. J.
  Robust Nonlin.}, vol.~26, no.~11, pp. 2455--2477, 2016.

\bibitem{FRT11}
A.~Ferrante, F.~Ramponi, and F.~Ticozzi, ``On the convergence of an efficient
  algorithm for {K}ullback--{L}eibler approximation of spectral densities,''
  \emph{{IEEE} Trans. Autom. Control}, vol.~56, no.~3, pp. 506--515, 2011.

\bibitem{FPR07}
A.~Ferrante, M.~Pavon, and F.~Ramponi, ``Further results on the
  {B}yrnes--{G}eorgiou--{L}indquist generalized moment problem,'' in
  \emph{Modeling, Estimation and Control}.\hskip 1em plus 0.5em minus
  0.4em\relax Springer Berlin Heidelberg, 2007, pp. 73--83.

\bibitem{CJ12}
T.~M. Cover and J.~A. Thomas, \emph{Elements of information theory}.\hskip 1em
  plus 0.5em minus 0.4em\relax John Wiley \& Sons, 2012.

\bibitem{G02}
T.~T. Georgiou, ``The structure of state covariances and its relation to the
  power spectrum of the input,'' \emph{{IEEE} Trans. Autom. Control}, vol.~47,
  no.~7, pp. 1056--1066, 2002.

\bibitem{rozanov1967stationary}
{Yu. A. Rozanov}, \emph{Stationary random processes}.\hskip 1em plus 0.5em
  minus 0.4em\relax San Francisco: Holden-Day, 1967.

\bibitem{nurdin2006spectral}
H.~I. Nurdin, ``Spectral factorization of a class of matrix-valued spectral
  densities,'' \emph{{SIAM} J. Control Optim.}, vol.~45, no.~5, pp. 1801--1821,
  2006.

\bibitem{rudin1987real}
W.~Rudin, \emph{Real and complex analysis}.\hskip 1em plus 0.5em minus
  0.4em\relax New York: McGraw-Hill, Inc., 1987.

\bibitem{LaSalle}
J.~P. LaSalle, \emph{The Stability and Control of Discrete Processes}, ser.
  Applied Mathematical Sciences.\hskip 1em plus 0.5em minus 0.4em\relax New
  York: Springer Verlag, 1986, vol.~62.

\end{thebibliography}

\end{document}